\numberwithin{equation}{section}
\font\sb = cmbx8 scaled \magstep0
\font\sn = cmssi8 scaled \magstep0
\long\def\combarak#1{\ifdraft{\sb #1 }\else\ignorespaces\fi}
\long\def\commargin#1{\ifdraft{\marginpar{\it #1}}\else\ignorespaces\fi}
\newcommand{\vt}{{\bf{t}}}
\newcommand{\vd}{{\bf{d}}}
\newcommand\mr{M_{m,n}}
\newcommand\vwma{very well multiplicatively  approximable}
\newcommand\vwa{very well approximable}
\newcommand\da{Diophantine approximation}
\newcommand\di{Diophantine}
\newcommand\de{Diophantine exponent}
\newcommand\hs{homogeneous space}
\newcommand\ssm{\smallsetminus}
\newcommand\lio{\Lambda \in \Omega}
\newif\ifdraft\drafttrue
\newcommand\name[1]{\label{#1}{\ifdraft{\sn [#1]}\else\ignorespaces\fi}}
\newcommand\bname[1]{{\ifdraft{\sn [#1]}\else\ignorespaces\fi}}
\newcommand\eq[2]{{\ifdraft{\ \tt [#1]}\else\ignorespaces\fi}\begin{equation}\label{eq:#1}{#2}\end{equation}}
\newcommand {\equ}[1]     {\eqref{eq:#1}}
\newcommand\pa{$\varphi$-approximable}
\newcommand{\TT}{{\mathcal{T}}}
\newcommand{\compose}{{\circ}}
\newcommand{\R}{{\mathbb{R}}}
\newcommand{\Z}{{\mathbb{Z}}}
\newcommand{\N}{{\mathbb{N}}}
\newcommand{\cl}{\overline}
\newcommand{\vv}{{\bf{v}}}
\newcommand{\const}{{\operatorname{const}}}
\newcommand{\SL}{\operatorname{SL}}
\newcommand{\ggm}{G/\Gamma}
\newcommand{\Lie}{\operatorname{Lie}}
\newcommand{\diag}{{\rm diag}}
\newcommand {\ignore}[1]  {}
\newcommand{\interior}{{\rm int}}
\newcommand{\dist}{{\rm dist}}
\newcommand{\cL}{{\mathcal L}}
\newcommand{\cM}{{\mathcal M}}
\newcommand{\cA}{{\mathcal A}}
\newcommand{\cT}{{\mathcal T}}
\newcommand{\cR}{{\mathcal R}}
\newcommand{\df}{{\, \stackrel{\mathrm{def}}{=}\, }}
\newcommand{\cF}{{\mathcal{F}}}
\newcommand{\x}{{\mathbf{x}}}
\newcommand{\vr}{{\bf r}}
\newcommand{\p}{{\bf p}}
\newcommand{\til}{\widetilde}
\newcommand{\supp}{{\rm supp}}
\newcommand{\BA}{{\bold{Bad}}}
\newcommand{\sm}{\smallsetminus}
\newcommand{\vre}{\varepsilon}
\newcommand\cag{$(C,\alpha)$-good}
\newcommand{\vf}{{\bf{f}}}
\newcommand\nz{\smallsetminus \{0\}}
\newcommand{\fa}{{\mathcal A}}
\newcommand\dt{Dirichlet's Theorem}
\newcommand{\fr}{{\mathcal R}}
\newcommand{\vq}{{\mathbf{q}}}
\newcommand{\vp}{{\bf p}}
\newcommand{\va}{{\bf a}}
\newcommand\amr{$Y\in M_{m,n}
$}
\newcommand\dc{Dirichlet constant}
\newcommand{\DI}{{\mathrm{DI}}}
\newtheorem{thm}{Theorem}[section]
\newtheorem{lem}[thm]{Lemma}
\newtheorem{prop}[thm]{Proposition}
\newtheorem{cor}[thm]{Corollary}
\newtheorem{dfn}[thm]{Definition}
\title[An `almost all versus no' dichotomy]{
An `almost all versus no' dichotomy in homogeneous dynamics and Diophantine approximation}
\author{Dmitry Kleinbock}
\address{Brandeis University, Waltham MA
02454-9110 {\tt kleinboc@brandeis.edu}}
\subjclass{11J13; 37A17}
\date{August 2009}
\begin{document}

     \begin{abstract}
    Let $Y_0$ be a not \vwa\ $m\times n$ matrix, and let $\cM$ be a connected analytic
submanifold in the space of $m\times n$ matrices containing $Y_0$. Then almost all $Y\in\cM$
are not \vwa. This and other similar statements are cast in terms of properties of certain
orbits on \hs s and deduced from quantitative nondivergence estimates
for
`quasi-polynomial'
flows 
on  the space
of lattices.

\end{abstract}

\maketitle

\centerline{\it Dedicated to S.G.\ Dani on the occasion of his 60th birthday}

\section{Introduction}\name{intro}

This work is motivated by a result from a recent paper  \cite{dani survey} by S.G.\ Dani. 
Let $G$ be a connected Lie group and $\Gamma$ a lattice in $G$. Suppose 
$a$ is a semisimple element of $G$, and let
$$U = \{u\in G : a^{-n}ua^{n} \to e\text{ as }n\to\infty\}$$be the  {\sl expanding horospherical
subgroup\/} with respect to $a$. 
Now suppose that $U$ is not contained
in any proper closed normal subgroup of $G$,  
take an arbitrary sequence of natural numbers $n_k\to\infty$, and denote
by $\cA$ the set  $\{a^{n_k}  : k\in\N\}$. Then it follows from results of N.\ Shah
\cite{Shah Indian academy} that for any $x\in\ggm$, the set 
\eq{conull}{\big\{u\in U :\cA ux  \text{
is dense in }\ggm\big\}} has full (Haar) measure.

One of the themes in \cite{dani survey} is a close investigation 
of  sets of type \equ{conull}. Namely, the following is a special case of \cite[Corollary 2.3]{dani survey}:

\begin{thm}
\name{thm: dani}
Let $G$, $\Gamma$, $a$, $\cA$ and $U$ be as above, and let $\{u_t: t\in\R\}$ be a one-parameter subgroup of $U$.  Suppose 
that for some $t_0\in\R$ and $x\in\ggm$,  $ \cA u_{t_0}x$ 
is dense in $\ggm$. Then  $ \cA u_tx$ 
is dense in $\ggm$ for almost all $t\in\R$. 
\end{thm}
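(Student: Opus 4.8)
The plan is to recast the density of $\cA u_tx$ as a statement about \emph{expanding translates} of an orbit segment of the one-parameter group $\{u_t\}$, and to extract the almost-everywhere conclusion from an equidistribution theorem for such translates; the substantive input is a nondivergence estimate of the kind developed later in the paper.

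First I would reduce to $t_0=0$. Putting $y\df u_{t_0}x$ and using $u_{t_0+s}=u_su_{t_0}$, one has $\cA u_{t_0+s}x=\cA u_sy$ for every $s$, hence
\[
\{t\in\R:\cA u_tx\text{ is dense in }\ggm\}=t_0+\{s\in\R:\cA u_sy\text{ is dense in }\ggm\}.
\]
Since $\cA y$ is dense by hypothesis, it suffices to prove: \emph{if $\cA y$ is dense, then $\cA u_sy$ is dense for almost every $s$}. Fix a countable base $\{V_i\}$ of nonempty open subsets of $\ggm$ and set $G_i\df\{s\in\R:\cA u_sy\cap V_i\neq\vrn\}$. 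Each $G_i$ is open, since ``$a^{n_k}u_sy\in V_i$ for some $k$'' is an open condition on $s$; each contains $0$, since $\cA y$ is dense; and $\{s:\cA u_sy\text{ is dense}\}=\bigcap_iG_i$. So it is enough to show that every $G_i$ is conull, equivalently that the closed set $\R\ssm G_i=\{s\in\R:a^{n_k}u_sy\notin V_i\text{ for all }k\}$ is Lebesgue-null.

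For the main point, write $a^{n_k}u_sy=\varphi_k(s)\,a^{n_k}y$, where $\varphi_k(s)\df a^{n_k}u_sa^{-n_k}$ is again a one-parameter subgroup of $U$. Because $U$ is the expanding horospherical subgroup of $a$, for each fixed $s\neq0$ one has $a^{-n}u_sa^{n}\to e$, so $\varphi_k(s)$ leaves every compact subset of $U$ as $k\to\infty$; thus the arcs $\varphi_k([-T,T])$ are longer and longer, and in exponential coordinates on the nilpotent group $U$ the maps $s\mapsto\varphi_k(s)$ are polynomial — exactly the ``quasi-polynomial'' trajectories the paper treats. Consequently $\{a^{n_k}u_sy:s\in[-T,T]\}=\varphi_k([-T,T])\cdot a^{n_k}y$ is an expanding translate of an orbit segment of $\{u_t\}$, based at $a^{n_k}y$, and the points $a^{n_k}y$ are dense in $\ggm$ by hypothesis. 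The statement we need — that $\R\ssm G_i$ is null, i.e.\ that for almost every $s$ the orbit $\{a^{n_k}u_sy:k\}$ meets $V_i$ — would follow, by a Borel-Cantelli argument, from a sufficiently effective equidistribution theorem for these expanding translates, of the type proved by N.~Shah \cite{Shah Indian academy}: the hypothesis that $U$ lies in no proper closed normal subgroup of $G$ forces the limiting measure to be Haar, while a nondivergence estimate for the polynomial trajectories $s\mapsto\varphi_k(s)$ is what prevents escape of mass along the line $s\mapsto u_sy$.

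Granting this, $\R\ssm G_i$ is null for every $i$, so $\bigcap_iG_i$ is conull, and translating back by $t_0$ gives the theorem. I expect the third paragraph to be the real obstacle. In particular, the weaker fact that the set in \equ{conull} (with $x$ replaced by $y$) is conull \emph{in $U$} does not by itself suffice: a perturbation of $u$ \emph{inside} $U$ is amplified by conjugation by $a$, so one cannot fiber $U$ over the line $\{u_t\}$ and deduce the one-dimensional statement from the ambient full-measure statement by Fubini; the effective, one-dimensional equidistribution/nondivergence input for the family $\{u_sy\}$ itself is essential.
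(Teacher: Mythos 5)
Your reductions are fine (shifting to $t_0=0$, the countable base $\{V_i\}$, openness of each $G_i$), and your closing remark is exactly right: one cannot Fubini the ambient full-measure statement on $U$ down to the line $\{u_t\}$. The gap is the third paragraph, which is the whole content of the theorem. The input you invoke does not exist: Shah's theorem \cite{Shah Indian academy} concerns translates $a^{n}Ux$ of the \emph{entire} expanding horospherical subgroup (or full-dimensional pieces of it), and says nothing about expanding translates $a^{n_k}u_{[-T,T]}y$ of a single one-parameter orbit, i.e.\ of a straight line inside $U$. For such degenerate curves equidistribution toward Haar measure can genuinely fail -- the very dichotomy under discussion allows the whole line $\{u_tx\}$ to lie in the complement of the set \equ{conull} (compare the affine subspaces of \S\ref{examples} consisting entirely of badly behaved points) -- so no unconditional equidistribution statement for these translates can be true, and your sketch never explains where the hypothesis that $\cA y$ is dense would enter such a statement; basing the expanding arcs at a dense sequence of points $a^{n_k}y$ does not by itself force the limit measures to be Haar. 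In addition, a rate effective enough for a Borel--Cantelli argument is far beyond what is known even for nondegenerate curves, and the quantitative nondivergence machinery of this paper only controls escape of mass (lower bounds on $\delta$), not visits to an arbitrary open set $V_i$; indeed the paper explicitly lists the analogous ``hitting a fixed open set $W$'' statement as an open question in \S\ref{general}.

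For comparison, the paper itself gives no proof of this theorem: it is quoted from Dani \cite{dani survey}, and Dani's argument runs through limits of the normalized length measures on the expanding arcs $a^{n_k}u_{[a,b]}y$, using the Dani--Margulis and Mozes--Shah theorems (hence Ratner's measure classification) rather than any equidistribution toward Haar. The only facts needed are that every weak-* limit of these measures is a finite homogeneous measure whose support contains the limit of the basepoints $a^{n_k}y$, together with nondivergence of unipotent averages. Given $V_i$, density of $\cA y$ lets one choose $n_k$ with $a^{n_k}y$ converging into $V_i$; if the bad set $J=\R\ssm G_i$ had positive measure, averaging over a small interval around a density point of $J$ and passing to the limit would produce a measure charging $V_i$ while trajectories from $J$, occupying proportion arbitrarily close to $1$ of the interval, eventually avoid $V_i$ -- a contradiction. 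So the density hypothesis is used only to steer the basepoints into the target open set, and neither Haar-equidistribution nor effectivity is required. Your outline identifies the right objects, but the step you rely on to close the argument is not available, so as written the proof has a genuine gap.
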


In other words, an interesting dichotomy takes place: either a one-parameter subgroup of
$U$ is contained in the complement to the set \equ{conull}, or it intersects it in a set of full measure.

\smallskip

In this note we discuss other situations where  analogous conclusions can be derived. 
That is, we consider certain  properties 
of points in a big `ambient' set ($U$ in the above example)
 which happen to be generic (satisfied for almost all points in that set), and show that some
 `nice' measures $\mu$ on this set satisfy a similar dichotomy: that is, 
those properties  hold either
 for $\mu$-almost all points, or for  no points in the support of $\mu$.
In particular, such a phenomenon has been observed in metric theory of simultaneous \da,
which will be the main context in the present paper. 
 For  positive integers $m,n$, we denote by $\mr$ 
the space of $m\times n$ matrices with real entries; this will be the ambient space
of our interest. We will interpret elements \amr\  as systems
of $m$ linear forms in $n$ variables. Properties 
of $Y$ of our interest will be cast in terms of existence
or non-existence of not too large  integer vectors $\vq\in\Z^n$ such that  $\dist(Y\vq,\Z^m)$ 
is small. Here are two 
examples.

\begin{dfn} \name{dfn de} The \de\  
 $\omega(Y)$ of \amr\ 
 is the supremum of $v > 0$ for which 
 \eq{de}{
\dist(Y\vq,\Z^m) < \|\vq\|^{ - v}\text{ for
infinitely many }\vq\in
\Z^n
\,.}
\end{dfn}

Here $\|\cdot\|$ and `$\dist$' depend on the choice of norms, but the above definition
does not.  
It is easy to see  that $\omega(Y) = n/m$ for  Lebesgue almost all \amr; those $Y$ for which 
$\omega(Y)$ is strictly bigger than $n/m$ are called {\sl very well approximable\/} (VWA).

\begin{dfn} \name{dfn sing} Let $\varphi:\R_+\to\R_+$ be a 
\commargin{`nonincreasing continuous' is not essential and can be replaced by `bounded', but 
to simplify things a bit, 
let's keep them for now}
non-increasing continuous function.
Say that  \amr\  is
{\sl $\varphi$-singular\/} if for any $c>0$ there is $N_0$ such that for all
$N \geq N_0$ one
can find 
$\vq \in \Z^n\nz$ with
\eq{singular}{
\dist(Y\vq,\Z^m)  < \frac{c \varphi(N)}{N^{m/n}}  \text{ and }
\|\vq\|  < c N\,.}
\end{dfn}

As the previous one, this definition is norm-independent.
One says  that $Y$ is {\sl singular\/} if it is $\varphi$-singular with $\varphi\equiv 1$.
Note that the property of being $\varphi$-singular depends only on the equivalence class of $\varphi$ 
(its tail up to a multiplicative constant) and holds for  Lebesgue almost no $Y$ as long as $\varphi$ is bounded, as shown by Khintchine. 
Also, in view of Khintchine's Transference Principle, see \cite[Chapter V]{Cassels}, $Y$  is  singular or  very well approximable 
if and only if so is its transpose.  

\smallskip

Proving that almost all $Y$ with respect
to some natural  measures other than Lebesgue do not have the above (and some other similar) properties
has been an active direction of research. Its motivation comes from a conjecture of Mahler \cite{mahler} 
(1932,
settled by Sprind\v zuk in 1964, see \cite{Sprindzuk-original, Sprindzuk-Mahler})
that $
\begin{pmatrix}x & x^2 & \dots & x^n\end{pmatrix}\in M_{n,1}\cong \R^n$  is not VWA
for Lebesgue almost every $x\in\R$. In other words, in Sprind\v zuk's terminology, 
the curve 
 \eq{mahler}{
 \{\begin{pmatrix}x & x^2 & \dots & x^n\end{pmatrix} \ :  \ x\in \R\}} is
{\sl extremal\/}. Later \cite[Theorem A]{KM}  the same conclusion was established for 
submanifolds of $ \R^n$ of the form
%
 \{
$ \vf(\x) 
 :  \x\in U\}$, where $U\subset \R^d$ is open and connected,
$\vf = (f_1,\dots,f_n): U\to \R^n$ is real analytic,  and
 \eq{cond on f}{
1, f_1,\dots,f_n\text{ are linearly independent over }\R\,.
}
 This settled a conjecture
made by Sprind\v zuk in 1980 \cite{Sprindzuk-Uspekhi}. A version of this result with `VWA' replaced by `singular' 
can be found in \cite{sing} and in a stronger form in \cite{di}.


On the other hand, it is easy to construct examples of non-extremal analytic submanifolds of $\R^n$, 
see \S\ref{examples}
for more detail. More precisely, 
a necessary and sufficient condition  for 
the extremality of an affine subspace $\cL\subset \R^n$ is given in \cite{dima gafa}.
This condition is 
explicitly written in terms of coefficients of  parameterizing maps for $\cL$, and, incidentally, 
it is shown that $\cL$ is not extremal  if and only if 
all its points are VWA.
Furthermore, the same dichotomy 
holds for any connected analytic
submanifold $\cM$ of $ \R^n$: either almost every\footnote{This will always mean `with respect to the smooth measure class on $\cM$'.}   point of $\cM$ is not VWA, or all points of $\cM$ are.
In \cite{dima gafa} this has been  done by finding an explicit necessary and sufficient condition involving 
the smallest affine subspace $\cL$ containing $\cM$ and via so-called 
 `inheritance theorems' generalizing the work in \cite{KM}.  See also  \cite{dima tams, yuqing} for extensions of these results with VWA replaced by `having \de\ bigger than $v$' for an arbitrary $v$.
 
 Note that the proofs in  \cite{KM} and subsequent  papers are based on homogeneous dynamics, that is,
 on quantitative nondivergence estimates for flows on the spaces of lattices, and on a possibility to phrase
 \de s and other characteristics in terms of the behavior of certain orbits. 

\smallskip
 
 In the present paper we give a simple argument showing how for arbitrary $m,n$ 
 the dichotomy
 described above can be directly (without writing explicit necessary
 and sufficient conditions) derived from quantitative nondivergence. Here is a special 
 case of our main result:

\begin{thm}
\name{thm: vwa} 
Let $\cM\subset \mr$ be a connected analytic
submanifold.
\begin{itemize}
\item[(a)] Let $v\ge n/m$ and suppose that  $\omega(Y_0) \le v$ for some  $Y_0\in\cM$; then $\omega(Y) \le v$ for almost every $Y\in\cM$.
\item[(b)] Let $\varphi:\N\to\R_+$ be as in Definition \ref{dfn sing},  and suppose that $\exists\,Y_0\in\cM$ which is not
$\varphi$-singular; then $Y$ is not
$\varphi$-singular for almost every  $Y\in\cM$.
\end{itemize}
\end{thm}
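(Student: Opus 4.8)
The plan is to translate both conditions into statements about orbits in the space of unimodular lattices and apply quantitative nondivergence. It suffices to argue in one analytic chart and then cover $\cM$ by countably many: fix $Y_0$, pick an open connected $U\subset\R^d$ and a real-analytic $\ff:U\to\mr$ with $\ff(\x_0)=Y_0$ and $\ff(U)$ an open piece of $\cM$. Put $G=\SL_{m+n}(\R)$, $\Gamma=\SL_{m+n}(\Z)$, and for $Y\in\mr$, $r>0$ set
\[
u_Y=\begin{pmatrix} I_m & Y\\ 0 & I_n\end{pmatrix},\qquad g_r=\diag(e^{r/m},\dots,e^{r/m},e^{-r/n},\dots,e^{-r/n})\in G .
\]
The Dani correspondence recasts both ``$\omega(Y)\le v$'' and ``$Y$ is not $\varphi$-singular'' as statements about the excursions of the orbit $r\mapsto g_ru_Y\Z^{m+n}$ into the cusp of $G/\Gamma$: in case (a), $\omega(Y)\le v$ holds iff for every $\delta>0$ the lattice $g_ru_Y\Z^{m+n}$ has no nonzero vector of norm $<e^{-(\gamma_v+\delta)r}$ once $r$ is large, where $\gamma_v\ge0$ is an explicit exponent with $\gamma_{n/m}=0$ (the case $v\ge n/m$ being exactly what makes the construction of $\gamma_v$ possible); in case (b), ``not $\varphi$-singular'' holds iff for some constant the orbit returns, along an unbounded set of times, to the set of lattices whose shortest vector is at least a threshold depending on $\varphi$. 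In each case one must prove that $r\mapsto g_ru_{\ff(\x)}\Z^{m+n}$ obeys the prescribed non-escape condition for almost every $\x$.

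The tool is the quantitative nondivergence estimate for quasi-polynomial flows in the Kleinbock--Margulis tradition, applied to the maps $\x\mapsto g_ru_{\ff(\x)}$ on a fixed ball $B\subset U$. Two inputs are required: (i) for every $\Gamma$-rational subspace $V\subset\R^{m+n}$, represented by a primitive integer point $\vv_V\in\Lambda^{\dim V}\R^{m+n}$, and every $r>0$, the function $\x\mapsto\|g_ru_{\ff(\x)}\vv_V\|$ is \cag\ on $B$ with $C,\alpha$ depending only on $\ff$ and $B$ --- which holds because the exterior-power coordinates of $g_ru_{\ff(\x)}$ are combinations $\sum_I e^{\lambda_I r}P_{I,V}(\x)$ with the $P_{I,V}$ lying in a fixed finite-dimensional space of real-analytic functions (bounded-degree polynomials in the $f_{ij}$), all elements of which are \cag\ with uniform constants, the $r$-dependence being killed by scaling invariance; and (ii) a nondegeneracy condition: $\sup_{\x\in B}\|g_ru_{\ff(\x)}\vv_V\|\ge\rho_0$ for all $V$ and all $r>0$, for one fixed $\rho_0>0$. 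Granting both, the estimate bounds by a constant times $\varepsilon^\alpha$ the measure of the set of $\x\in B$ for which $g_ru_{\ff(\x)}\Z^{m+n}$ stays $\varepsilon$-short throughout a whole time interval; a scale-by-scale Borel--Cantelli argument (partition $[0,\infty)$ and apply the estimate to the rescaled maps $\x\mapsto g_r\bigl(g_Tu_{\ff(\x)}\bigr)$, $r\in[0,1]$, with $\varepsilon$ of order $e^{-(\gamma_v+\delta)T}$) then produces the non-escape condition of the first paragraph for almost every $\x\in B$. So on every chart on which (ii) holds, almost every $Y$ has the desired property.

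It remains to rule out the failure of (ii) using the hypothesis on $Y_0$ --- this is the dichotomy at the heart of the theorem. If no $\rho_0>0$ works, then some nonzero $\Gamma$-rational $\vv_V$ (possibly after passing to a proper $\Gamma$-rational affine subspace containing $\cM$) witnesses an algebraic degeneracy: the ``non-decaying'' coordinates $P_{I,V}$ (those with $\lambda_I\ge0$) vanish identically on $B$. Being real-analytic, and $\cM$ being connected, this identity propagates to all of $\cM$; hence $\|g_ru_{\ff(\x)}\vv_V\|$ decays exponentially in $r$ uniformly over $\x$ in each chart, and so does the shortest vector of $g_ru_{\ff(\x)}\Z^{m+n}$. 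By the converse Dani correspondence, \emph{every} $Y\in\cM$ is then VWA and singular --- already a contradiction when $v=n/m$ in (a), or when $\varphi$ is bounded in (b). For general $v>n/m$ one compares the exponential escape rate so produced with $\gamma_v$; if it is not automatically larger, one first descends into the smallest $\Gamma$-rational affine subspace containing $\cM$, where $\cM$ is no less degenerate, and iterates the argument there --- this is the ``inheritance'' mechanism of \cite{KM} and \cite{dima gafa}. In all cases the hypothesis $\omega(Y_0)\le v$ (resp.\ $Y_0$ not $\varphi$-singular) forbids the degenerate alternative, so (ii) holds on every chart and the proof is complete.

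The main obstacle is setting up the quantitative nondivergence input with the right uniformities and matching it to the Diophantine scales: the \cag\ constants in (i) and the threshold $\rho_0$ in (ii) must be independent of both the time $r$ and the infinitely many rational subspaces $V$ (which is what forces one to work with the fixed finite-dimensional function space carrying the quasi-polynomial orbit and with the finiteness of ``dangerous'' subspaces near a point), and the quantitative return rate delivered by the estimate must be compared carefully with $\gamma_v$, and with the tail of $\varphi$, so that the sharper general-$v$ and $\varphi$-singular statements come out rather than merely the coarse VWA dichotomy. Once this is done, ``almost all versus no'' is literally the alternative ``(ii) holds along $\cM$'' / ``(ii) fails along $\cM$'', the latter being incompatible with the mere existence of $Y_0$.
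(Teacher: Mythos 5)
Your toolkit (Dani correspondence, $(C,\alpha)$-good exterior-power coordinates, quantitative nondivergence, countably many analytic charts) is the right one, but the way you set up condition (ii) misses the mechanism that actually drives the dichotomy, and the patch you propose for its failure does not work. You demand a nondegeneracy threshold $\rho_0>0$ with $\sup_{\x\in B}\|g_r u_{\ff(\x)}\vv_V\|\ge\rho_0$ \emph{uniformly in $r$ and in all rational $V$}. The hypothesis of the theorem does not supply such a bound: $\omega(Y_0)\le v$ only gives, for each $\beta'$ slightly above the corresponding growth exponent, the time-dependent bound $\delta(g_r u_{Y_0}\Z^k)\ge e^{-\beta' r}$ for large $r$ (and ``not $\varphi$-singular'' only gives $\delta\ge c\psi(\vt)$ along an unbounded set of times). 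The point of the paper is that the sharper nondivergence estimate of \cite[Theorem 2.2]{dima tams} accepts exactly this: condition (ii) there reads $\ell_V\circ h(\x)\ge\rho^{\dim V}$ at a \emph{single} point of the ball, with $\rho=\rho(\vt)$ allowed to decay with the time parameter; Minkowski's lemma (Lemma \ref{lem: minkowski}) converts the lower bound on $\delta$ at the one good point into this hypothesis for every $V$ simultaneously, and then Borel--Cantelli (part (a)) or a single-time measure estimate (part (b)) finishes. Combined with the open--closed argument on the connected chart ($A_2=\overline{A_1}\cap U$), this removes any need for a uniform-in-time nondegeneracy statement or for identifying degenerate rational subspaces.

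Your fallback for when the uniform $\rho_0$ fails is where the genuine gap lies. First, failure of a bound that is quantified over infinitely many rational $V$ and all $r>0$ need not be witnessed by a single $V$ whose non-decaying coordinates vanish identically; you give no compactness or pigeonhole argument producing such a witness. Second, and more seriously, the alternative you derive from it --- ``every $Y\in\cM$ is VWA and singular'' --- does not contradict the hypothesis in the cases that matter: for $v>n/m$ the manifold may lie in an affine subspace all of whose points have exponent exactly $v$ (such subspaces exist, cf.\ \S\ref{examples} and \cite{dima tams}), so the uniform condition fails, the hypothesis $\omega(Y_0)\le v$ holds, the conclusion is true, and your argument produces nothing; similarly in (b), for rapidly decaying $\varphi$ a manifold of singular but not $\varphi$-singular matrices defeats the trichotomy ``uniform (ii) or everything is $\varphi$-singular''. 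At that point you appeal to ``comparing escape rates with $\gamma_v$'' and ``descending to the smallest rational affine subspace and iterating the inheritance mechanism of \cite{KM,dima gafa}'', but this is precisely the explicit necessary-and-sufficient-condition analysis the theorem is designed to bypass; it is not carried out in your proposal, and it is not available in this generality (arbitrary $m,n$, arbitrary exponent $v$, arbitrary $\varphi$). So as written the proof establishes at best the extremality-type case ($v=n/m$, $\varphi$ bounded away from the critical decay), not the stated theorem; to repair it you should replace the uniform threshold by the scale-dependent $\rho(\vt)$ supplied by the good point, which is exactly the paper's argument.
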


In other words, the aforementioned \di\ properties\footnote{Recall that 
$Y$ is called {\sl \di\/} if $\omega(Y) < \infty$. 
We remark that in \cite{chinese} a weaker result has 
been obtained by elementary methods (not using estimates
on \hs s) in the case $m = 1$: if a connected analytic submanifold $\cM$ of $\R^n$ contains a
\di\ vector, then almost all vectors in $\cM$ are \di.} hold either for almost all or for no $Y\in\cM$. In particular, $\cM$ is extremal if and only if
it contains at least one not \vwa\ point.
\smallskip

Clearly (by Fubini's Theorem) the properties discussed in the above theorem
hold for almost every translate of an arbitrary $\cM$.
It is also clear that if
$\cM$ belongs to a proper  rational affine subspace, that is, if 
$Y\vq \in\Z^m$ for some $\vq\in\Z^n\nz$ and all $Y\in\cM$, then 
all points of $\cM$ have infinite \de s and are $\varphi$-singular for arbitrary 
positive $\varphi$.
\commargin{need better terminology!}
However 
there exist less trivial examples of those exceptional subspaces; these will be discussed in 
\S\ref{examples}.  

\ignore{simple formula holds for \de\ $\omega(\cL)$
of affine subspaces $\cL$ of $ \R^n$ (that is, the real number $v$ such that
$\omega(Y) = v$ for almost all $Y\in \cL$). Namely, the following is true:

\begin{thm}
\name{thm: formula} Let an $s$-dimensional affine subspace $\cL$ of $ \R^n$ be parametrized by
\eq{L}{\x\mapsto(\x,
\x A' + \va_0)\,,}
where 
$A'\in M_{s,n-s}$ 
and $\va_0\in\R^{n-{s}}$ (here both $\x$ and $\va_0$ are
row vectors). Then $\omega(\cL) = \max\big(n,\omega(A)\big)$,
where $A = \begin{pmatrix}
\va_0 \\ A'
\end{pmatrix}\in M_{s+1,n-s}$.\end{thm}

This answers a question asked in \cite{dima tams}, where the above formula was proved
in several special cases.
Similarly (see \S \ref{appl}) one can write a rather simple necessary and sufficient condition
for a function $\varphi$ and a matrix $A$ equivalent to almost every
($\Leftrightarrow$ at least one) point
of $\cL$ parameterized as in \equ{L} being not $\varphi$-singular. }
\smallskip

As was the case in the papers  \cite{dima gafa, dima tams, yuqing}, Theorem \ref{thm: vwa} is deduced from  statements involving orbits on the space of lattices.
Namely, let us put \eq{notation}{k = m+n,  \ G = \SL_{k}(\R),  \ \Gamma = \SL_{k}(\Z)\text{
and }\Omega = \ggm\,.} The space $\Omega$  can be viewed as the space of unimodular lattices in $\R^k$ by means
of the correspondence $g\Gamma \mapsto g \Z^k$.
Denote by $\fa
$
the set of 
$\vt = (t_1,\dots,t_{k})\in \R^{k}$
such that
$$t_1,\dots,t_{k} > 0
\quad \mathrm{and}\quad 
\sum_{i = 1}^m t_i =\sum_{j = 1}^{n} t_{m+j} \,.$$ For $\vt \in
\fa$  write  
\eq{defn gt}{g_{\vt} \df
\diag(e^{t_1}, \ldots,
e^{ t_m}, e^{-t_{m+1}}, \ldots,
e^{-t_{k}})\in G\,.
}
We will consider subsets 
of $G$ of the form
$$g_{{}_\cT} \df \{g_\vt : \vt\in \cT\}\,,
\text{ where $\mathcal{T} \subset \fa$ is  unbounded}\,,$$
 and will study their action on $\Omega$.
 Also for \amr\ define \commargin{I got rid of $\tau$ since there are way too many letters $T$ in various fonts}
$$
u_Y \stackrel{\mathrm{def}}{=} \left(
\begin{array}{ccccc}
I_{m} & Y \\ 0 & I_{n} 
\end{array}
\right), 
$$
where $I_{\ell}$ stands for the $\ell\times \ell$ identity matrix. 
Then it is clear that the group $\{u_Y : Y\in\mr\}$ is the expanding horospherical subgroup of $G$
corresponding to $g_{\vt}$ where $\vt$ belongs to the `central ray' in $\fa$, that is, to
\eq{def r}{\fr
 \df \left\{\left(\tfrac t m,\dots,\tfrac t m,\tfrac t n,\dots,
\tfrac t n\right)
: t > 0\right\}\,.}

Our goal is to show a dichotomy similar to (and in fact, generalizing) the one from Theorem \ref{thm: vwa} for certain properties of $ g_{{}_\cT}$-orbits on $\Omega$. Fix a norm $\|\cdot\|$ on $\R^k$ and define 
a function $\delta: \Omega \to \R_+$ by 
$$
\delta(\Lambda) \df \inf_{\vv\in\Lambda\nz}\|\vv\|\ \ \text{ for } \Lambda\in\Omega\,.
$$
$\Omega$ is a noncompact space, and 
the function
$\delta$ defined above 
can be used to describe
its geometry at infinity. Namely, Mahler's Compactness Criterion 
(see \cite{Rag} or \cite{Bekka-Mayer-book}) says that a subset of $\Omega$ is relatively
compact if and only if $\delta$ is bounded away from zero on this subset.
Further, it follows from the  reduction
theory for $\SL_{k}(\Z)$ 
 that 
the ratio of $1 + \log\big(1/{\delta(\cdot)}\big)$ 
and $1 + \text{dist}(\cdot, \Z^{k})$ is bounded between two 
positive constants for any right invariant Riemannian metric 
`dist' on $\Omega$. In other words, a lattice $\lio$ for
which $\delta(\Lambda)$ is small is approximately 
$- \log{\delta(\Lambda)}$ away from the base point 
$\Z^{k}$. 
This justifies the following 

\begin{dfn} \name{dfn ge} For an unbounded subset 
 $\mathcal{T}$ of $\fa$ and $\lio$, 
define the {\sl growth exponent\/}
$\gamma_{{}_\cT}(\Lambda)$ of $\Lambda$ with respect to $\cT$ by
$$
\gamma_{{}_\cT}(\Lambda) \df \limsup_{\vt\to\infty,\,\vt\in\cT}
\frac{-\log\big(\delta(g_\vt\Lambda)\big)}{\|\vt\|}\,.
$$
\end{dfn}

In other words (in view of the  remark preceding the above definition), $
\gamma_{{}_\cT}(\Lambda) > \beta$ is equivalent to the existence of $\beta' > \beta$ 
such that $ \text{dist}(g_\vt\Lambda, \Z^{k}) \ge \beta'  \|\vt\|$ for an unbounded set of $\vt\in \cT$.
Even though the definition involve 
various norms, it  clearly does not depend on the choices of norms. Also the growth exponent
does not change if $\cT$ is replaced with another set of bounded Hausdorff distance form $\cT$,
so in what follows we can and will choose $\cT$ to be countable and with distance between its
different elements uniformly bounded from below. 
Note that it can be derived  from
the Borel-Cantelli lemma that for any
unbounded $\cT$, the  growth exponent\footnote{See also \cite{KM LL} for finer growth properties of almost all orbits on \hs s of Lie groups.} 
 of $\Lambda$ with respect to $\cT$ is equal to zero for Haar-almost all $\lio$. 


\smallskip

Here is another property related to asymptotic behavior of trajectories:

\begin{dfn} \name{dfn div} Given 
$\mathcal{T}\subset \fa$ and a bounded function
$\psi: \cT\to\R_+$, say that the trajectory $ g_{{}_\cT} \Lambda$ {\sl diverges faster than $\psi$\/}
\commargin{I know it does not sound appropriate, but let me keep this terminology for now} if 
$$
\limsup_{\vt\to\infty,\,\vt\in\cT}
\frac{\delta(g_\vt\Lambda)}{\psi(\vt)} = 0\,.
$$
In other words, if for every $c > 0$ one has ${\delta(g_\vt\Lambda)} < c\psi(\vt)$
for all $\vt\in \cT$ with large enough (depending on $c$) norm. 
\end{dfn}

Again this definition is insensitive to choices of norms, 
and also depends only on the behavior of $\psi$ at infinity up to a multiplicative constant.
An example: if $\psi\equiv 1$, the above condition, in view of
Mahler's Compactness Criterion, says that the trajectory $ g_{{}_\cT}  \Lambda$ diverges (that is, eventually leaves any compact subset of $\Omega$) as $\vt\to\infty$ in $\cT$.
Clearly, because of mixing of the $G$-action on $\Omega$, 
for any $\psi$ and $\cT$ as above, 
$ g_{{}_\cT} \Lambda$  diverges faster than $\varphi$ for Haar-almost no $\lio$.
\smallskip

In this paper we show:

\begin{thm}
\name{thm: ge} Suppose we are given $\lio$,  an unbounded $\mathcal{T}\subset \fa$, and 
a connected analytic
submanifold $\cM$ of $\mr$. Then:

\begin{itemize}
\item[(a)]  Let $\beta \ge 0$ and $Y_0\in\cM$  be such that $\gamma_{{}_\cT}(u_{Y_0}\Lambda) \le \beta$; then  $\gamma_{{}_\cT}(u_Y \Lambda) \le \beta$ for  almost all  $Y\in\cM$; 

\item[(b)] Let $\psi:\cT\to\R_+$ be  bounded,  and suppose that $\exists\,Y_0\in\cM$ such 
that the trajectory $ g_{{}_\cT} u_{Y_0} \Lambda$ does not  diverge faster than $\psi$; then $ g_{{}_\cT} u_{Y} \Lambda$ does not  diverge faster than $\psi$ for almost every  $Y\in\cM$.
\end{itemize}

\end{thm}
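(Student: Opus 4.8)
The plan is to prove a \emph{local} version of the dichotomy on small analytic pieces of $\cM$ and then to patch the pieces together by connectedness. Since $\cT$ may be taken countable and each $Y\mapsto\delta(g_\vt u_Y\Lambda)$ is continuous, the function $Y\mapsto\gamma_{{}_\cT}(u_Y\Lambda)$ is Borel on $\mr$, and likewise the set of $Y$ for which $g_{{}_\cT}u_Y\Lambda$ diverges faster than $\psi$ is Borel; hence the set $\mathcal G$ of $Y\in\cM$ satisfying the desired conclusion is a Borel subset of $\cM$. Cover $\cM$ by countably many open sets $W$, each of the form $\ff(B)$ for a real-analytic chart $\ff\colon U\to\cM$ and a ball $B$ such that a fixed dilate $\tilde B\supset B$ still has $\overline{\tilde B}\subset U$. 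It is enough to show: \emph{for every such $W$, if $\mathcal G\cap W\ne\varnothing$ then $W\setminus\mathcal G$ is Lebesgue null}. Granting this, $\bigcup\{W:\mathcal G\cap W\ne\varnothing\}$ and $\bigcup\{W:\mathcal G\cap W=\varnothing\}$ are open sets covering $\cM$, and they are disjoint: if some $Y$ lay in $W'$ of the first type and $W''$ of the second, then $W'\cap W''$ would be a nonempty open set in which $\mathcal G$ is conull, so $\mathcal G\cap W''\supset\mathcal G\cap W'\cap W''\ne\varnothing$. Since $\cM$ is connected and the hypothesis point $Y_0\in\mathcal G$ lies in a $W$ of the first type, the second family is empty; thus $W\setminus\mathcal G$ is null for every $W$, and $\cM\setminus\mathcal G$, a countable union of such sets, is null.

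So fix $W=\ff(B)$ with $Y_0\in\mathcal G\cap W$, and write $\Lambda=h\Z^k$ with $h\in G$. The relevant objects are the quasi-polynomial maps $\x\mapsto g_\vt u_{\ff(\x)}h$ ($\vt\in\cT$) together with the lattice functions $\varphi_{\vt,\Delta}(\x)\df\|g_\vt u_{\ff(\x)}\Delta\|$, where $\Delta$ runs over the primitive subgroups of $\Lambda$ and $\|\cdot\|$ denotes covolume (the norm of the corresponding wedge in $\wedge^{\dim\Delta}\R^k$). Because the entries of $\wedge^j u_{\ff(\x)}$ are polynomials of degree $\le k$ in the real-analytic coordinate functions of $\ff$, each $\varphi_{\vt,\Delta}$ is, after grouping coordinates in $\wedge^j\R^k$, the norm of a vector whose components have the form $e^{\mu_I(\vt)}\theta_I(\x)$, with $\mu_I$ a linear form in $\vt$ and each $\theta_I$ lying in a \emph{fixed}, finite-dimensional space $\mathcal V$ of real-analytic functions on $U$. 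Using that a finite-dimensional space of real-analytic functions is uniformly \cag\ on $\tilde B$ (with constants independent of the chosen function, by the analytic estimates of \cite{KM}), that the class of \cag\ functions is unchanged under rescaling individual components, and that it is closed under passing to norms of finite tuples with a controlled loss in the constant, one obtains a single pair $(C,\alpha)$, depending only on $\mathcal V$, $\tilde B$ and $k$, such that every $\varphi_{\vt,\Delta}$ is \cag\ on $\tilde B$. This is exactly the regularity input needed to run the \qn\ estimate of \cite{KM}, in the refined form developed in \cite{dima gafa,dima tams}.

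The point $Y_0$ provides the nondegeneracy input. By Minkowski's second theorem, for any primitive $\Delta$ of rank $j$ one has $\varphi_{\vt,\Delta}(\x_0)=\|g_\vt u_{Y_0}\Delta\|\gg_k\delta(g_\vt u_{Y_0}\Lambda)^{\,j}$, so for a suitable constant $c=c(k)>0$ the nondegeneracy hypothesis of \qn\ at level $\vt$, namely $\sup_{\x\in B}\varphi_{\vt,\Delta}(\x)^{1/\dim\Delta}\ge\rho_\vt$ for all primitive $\Delta$, holds as soon as $\rho_\vt\le c\,\delta(g_\vt u_{Y_0}\Lambda)$. In case (a), the hypothesis $\gamma_{{}_\cT}(u_{Y_0}\Lambda)\le\beta$ gives, for every $\vre>0$, the bound $\delta(g_\vt u_{Y_0}\Lambda)\ge e^{-(\beta+\vre)\|\vt\|}$ whenever $\|\vt\|$ is large; choosing $\rho_\vt\asymp e^{-(\beta+\vre)\|\vt\|}$, the \qn\ estimate yields $\big|\{\x\in B:\delta(g_\vt u_{\ff(\x)}\Lambda)<e^{-(\beta+2\vre)\|\vt\|}\}\big|\ll e^{-\alpha\vre\|\vt\|}\,|B|$ for all $\vt\in\cT$ with $\|\vt\|$ large. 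Since $\cT$ may be taken uniformly discrete, $\#\{\vt\in\cT:\|\vt\|\le R\}$ is bounded by a polynomial in $R$, so the right-hand side is summable over $\vt\in\cT$; by the Borel--Cantelli lemma, $\gamma_{{}_\cT}(u_{\ff(\x)}\Lambda)\le\beta+2\vre$ for a.e.\ $\x\in B$, and letting $\vre$ run through a sequence tending to $0$ shows $W\setminus\mathcal G$ is null. In case (b), the hypothesis supplies $c_0>0$ and a sequence $\vt_\ell\to\infty$ in $\cT$ with $\delta(g_{\vt_\ell}u_{Y_0}\Lambda)\ge c_0\psi(\vt_\ell)$; applying \qn\ at these $\vt_\ell$ with $\rho_{\vt_\ell}\asymp\psi(\vt_\ell)$ gives $\big|\{\x\in B:\delta(g_{\vt_\ell}u_{\ff(\x)}\Lambda)<\psi(\vt_\ell)/j\}\big|\ll j^{-\alpha}|B|$ for all large $j$, and since the set of $\x\in B$ for which $g_{{}_\cT}u_{\ff(\x)}\Lambda$ diverges faster than $\psi$ is contained in $\bigcap_j\bigcup_R\bigcap_{\vt\in\cT,\,\|\vt\|>R}\{\x\in B:\delta(g_\vt u_{\ff(\x)}\Lambda)<\psi(\vt)/j\}$, its measure is at most $\inf_j\,Cj^{-\alpha}|B|=0$.

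The soft parts of this scheme are the connectedness bookkeeping and the Borel--Cantelli (respectively infimum-over-$j$) arguments. The main obstacle, and the place where care is required, is the middle two steps: showing that the quasi-polynomial flows $\x\mapsto g_\vt u_{\ff(\x)}$ are uniformly \cag, so that the \qn\ estimate applies with constants independent of $\vt$ and of the sublattice $\Delta$; and keeping track of exponents so that the nondegeneracy extracted from the \emph{single} point $Y_0$ enters the \qn\ bound through a power that leaves the resulting measure estimate summable in $\vt$ in case (a), and forced to $0$ as $j\to\infty$ in case (b). The former rests on the \cag\ property of analytic maps established in \cite{KM}; the latter is precisely why the nondegeneracy input is phrased in terms of $\delta(g_\vt u_{Y_0}\Lambda)$, controlled directly by the hypothesis at $Y_0$, rather than in terms of individual covolumes.
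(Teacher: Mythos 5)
Your proposal is correct and follows essentially the same route as the paper: analyticity makes the coordinate functions of $\x\mapsto g_\vt u_{F(\x)}(\vv_1\wedge\cdots\wedge\vv_j)$ lie in a fixed finite-dimensional space and hence uniformly \cag\ on small chart balls, Minkowski's lemma converts the hypothesis at the single good point into the lower bound $\rho^{\dim V}$ required by the sharp \qn\ estimate of \cite{dima tams}, and Borel--Cantelli over a uniformly discrete $\cT$ (respectively, a direct smallness estimate along the unbounded subset $\cT'$) combined with a connectedness argument yields (a) (respectively (b)), which is exactly the paper's scheme, with your countable-cover globalization replacing the paper's open-and-closed set $A_2=\overline{A_1}\cap U$ only cosmetically. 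The one small adjustment needed is that the \cag\ property for a finite-dimensional space of analytic functions is furnished only on a sufficiently small neighborhood of each point, so the dilated balls $\tilde B$ in your cover should be chosen inside such neighborhoods (as the paper does via Corollary \ref{cor: i}) rather than merely subject to $\overline{\tilde B}\subset U$.
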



\smallskip

A connection between the corresponding parts of   Theorems   \ref{thm: vwa} and \ref{thm: ge} is well known. Namely, it is observed by Dani \cite{Dani-div} that $Y$ is singular
if and only if the trajectory  $ g_{{}_\fr} u_Y \Z^k$ diverges, where $\fr$ is as in 
\equ{def r}, so that $ g_{{}_\fr}$ is a one-parameter semigroup.
Also  it follows from \cite[Theorem 8.5]{KM LL} that $Y$ is VWA iff the growth exponent of $u_Y \Z^k$ with respect to $\fr$ is positive, and moreover,
the latter growth exponent determines $\omega(Y)$. 
The aforementioned \di\ implications of Theorem \ref{thm: ge} 
correspond to the case $\cT= \fr$. However, choosing other unbounded subsets of $\fa$
also gives rise to interesting results, for example related to so-called multiplicative
approximation ($\cT = \fa$) or approximation with weights  ($\cT$ is a ray in $\fa$ different from $\fr$). We will comment on this in \S\ref{other}.

\smallskip

The structure of this paper is as follows. In the next section we prove Theorem \ref{thm: ge} using 
quantitative nondivergence estimates. Then in \S\ref{appl} we will go through the correspondence between
\da\ and dynamics,  and derive Theorem   \ref{thm: vwa} from Theorem \ref{thm: ge}. We also 
present other \di\ applications, including a solution to a matrix analogue of Mahler's Conjecture (Corollary \ref{cor: moscow}) suggested to the author by G.A.\ Margulis. In the last section we 
 bring up some conjectures and open questions, and also remark that the methods
 employed in this paper 
are applicable to objects somewhat more general  than analytic submanifolds of $\mr$. 

\smallskip
{\bf Acknowledgements:}
This work has spurred out  of a joint project with Barak Weiss, in which we attempted 
to answer some
of the still open questions asked in the last section of the paper.  Barak's  thoughtful remarks and insights are
gratefully acknowledged. Thanks are also due to Gregory Margulis, Elon Lindenstrauss,  Nimish Shah and the reviewer for useful comments.
The author was supported by 
NSF
Grant DMS-0801064.


\section{Quantitative nondivergence and Theorem \ref{thm: ge}}\name{qn}

Notation: if $B = B(\x,r)$ is a ball in $\R^d$ and $c > 0$, $cB$ will denote the 
ball $B(\x,cr)$. Lebesgue measure on $\R^d$ will be denoted by $\lambda$.
Given $C,\alpha > 0$ and $U\subset \R^d$, say that  a function $f:U\to \R$ is
         {\sl $(C,\alpha)$-good on $U$\/}
         if for any ball $B \subset U$ 
and any
$\vre > 0$ one has
$$\lambda\big(\{\x\in B : |f(\x)| < \vre\}\big) \le C
\left(\frac{\varepsilon}{\sup_{\x\in B} |f(\x)|}\right)^\alpha{\lambda(B)}\,.$$
This property captures `quasi-polynomial' behavior of a function $f$.
See \cite{KM, friendly} for a discussion and many examples. The following 
proposition, which is essentially implied by \cite[Corollary 3.3]{dima gafa},  will be useful:

\begin{prop}
\name{prop: anal good}
Let $U$ be a connected open subset  of $\R^d$, and let $\cF$ be a finite-dimensional
space of  analytic real-valued functions on $U$. Then for any $\x\in U$ 
there exist $C,\alpha > 0$ 
\commargin{In fact I think $C$ and $\alpha$ can be chosen uniformly, but we probably
do not need it... or do we?} and a neighborhood $W\ni \x$ contained in $U$ such that
every $f\in\cF$ is \cag\ on $W$.
\end{prop}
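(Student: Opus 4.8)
The plan is to reduce the assertion, by a compactness argument inside the finite-dimensional space $\cF$, to the fundamental estimate underlying the theory of $(C,\alpha)$-good functions. First I would fix a basis $f_1,\dots,f_N$ of $\cF$ and a norm $\|\cdot\|$ on $\cF$, and let $S=\{f\in\cF:\|f\|=1\}$ be the unit sphere, which is compact. Since the zero function is trivially $(C,\alpha)$-good and rescaling a function by a nonzero constant preserves $(C,\alpha)$-goodness, it suffices to produce $C,\alpha>0$ and a neighborhood $W\ni\x$ with $\overline{W}$ compact and contained in $U$ such that every $f\in S$ is $(C,\alpha)$-good on $W$.

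The key input --- and the only place where analyticity is really used --- is a \emph{uniform non-flatness} statement: there exist $\ell\in\N$, reals $0<\rho\le M<\infty$, and a neighborhood $W\ni\x$ with $\overline{W}$ compact and contained in $U$, such that every $f\in S$ satisfies $\|f\|_{C^{\ell+1}(\overline{W})}\le M$ together with
\[
\max_{|\beta|\le\ell}\bigl|\partial^\beta f(\mathbf{y})\bigr| \ge \rho \qquad \text{for all } \mathbf{y}\in\overline{W}.
\]
To get the lower bound: a fixed $f\in S$ is analytic and not identically $0$ on the connected set $U$, so not all of its partial derivatives vanish at $\x$; hence $S=\bigcup_{\ell\ge 1}V_\ell$, where $V_\ell:=\{f\in S:\max_{|\beta|\le\ell}|\partial^\beta f(\x)|>0\}$ is open and increases with $\ell$, and compactness of $S$ forces $S=V_\ell$ for some single $\ell$. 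For that $\ell$ the function $f\mapsto\max_{|\beta|\le\ell}|\partial^\beta f(\x)|$ is continuous and strictly positive on the compact set $S$, hence bounded below by a positive constant; joint continuity of $(\mathbf{y},f)\mapsto\partial^\beta f(\mathbf{y})$ together with the tube lemma then propagates a lower bound $\rho>0$ from the point $\x$ to a whole neighborhood $W$, which we shrink so that $\overline{W}$ is compact and lies in $U$. The uniform upper bound $M$ on $\overline{W}$ follows the same way from compactness of $S$ and smoothness of the $f_i$.

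Finally I would invoke the fundamental estimate of the theory --- this is the technical heart, and it is essentially \cite[Corollary 3.3]{dima gafa}, which ultimately rests on the polynomial case (every polynomial of degree $\le\ell$ on $\R^d$ is $(C,\alpha)$-good with $C,\alpha>0$ depending only on $d,\ell$; cf.\ \cite{KM}) combined with Taylor approximation on a sufficiently small ball. It yields: a $C^{\ell+1}$ function $g$ on a ball $B$ with $\|g\|_{C^{\ell+1}(B)}\le M$ and $\inf_B\max_{|\beta|\le\ell}|\partial^\beta g|\ge\rho$ is $(C,\alpha)$-good on $B$, with $\alpha$ depending only on $d,\ell$ and $C$ depending only on $d$, $\ell$, $M/\rho$ and $\diam B$. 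By the preceding paragraph every $f\in S$ satisfies these hypotheses on every ball $B\subseteq W$ with the \emph{same} $\ell,\rho,M$ (and $\diam B\le\diam W$), so every $f\in S$ --- hence, by the first paragraph, every $f\in\cF$ --- is $(C,\alpha)$-good on $W$ for one fixed pair $(C,\alpha)$, which proves the proposition.

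The step requiring the most care is the uniformity: the conclusion asks for a \emph{single} $C$ and $\alpha$ valid for the whole infinite family $\cF$, which forces the vanishing order $\ell$ and the bounds $\rho,M$ to be chosen independently of $f\in S$. Compactness of $S$ delivers exactly this, and it is here that analyticity is indispensable --- it rules out flat elements of $\cF$, so the argument genuinely fails for a general $C^\infty$ family. The remaining ingredients (passing to $f\in S$, fitting $\overline{W}$ inside $U$, spreading the jet bounds from $\x$ to $W$) are routine.
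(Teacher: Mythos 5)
Your argument is correct, but it takes a different route from the paper's. The paper's proof is a short reduction: assume without loss of generality that $\cF$ contains the constants, pick a basis $1,f_1,\dots,f_N$, note that analyticity plus linear independence forces the map $\vf=(f_1,\dots,f_N)$ to be nondegenerate at every point of $U$, and then quote \cite[Proposition 3.4]{KM}, which says precisely that all linear combinations of $1,f_1,\dots,f_N$ --- that is, all of $\cF$ --- are \cag\ on a neighborhood of $\x$, with an explicit $\alpha$. What you have done is, in effect, to inline the proof of that cited proposition in the analytic setting: your compactness argument on the unit sphere of $\cF$, with analyticity ruling out elements flat at $\x$ and producing one vanishing order $\ell$ and uniform bounds $\rho,M$ on a neighborhood, is exactly the mechanism inside \cite[Proposition 3.4]{KM}, where the uniform nonvanishing of some derivative of order at most $\ell$ of each unit-norm combination comes from the nondegeneracy hypothesis instead of from analyticity. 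Accordingly, the black box you need is not \cite[Corollary 3.3]{dima gafa} (that corollary is essentially the Proposition itself, which is why the paper says ``essentially implied by''), but the lower-level calculus lemma behind it, as in \cite[\S 3]{KM}: a $C^{\ell+1}$ function whose order-$\le\ell$ jet is bounded below by $\rho$ and whose derivatives are bounded by $M$ on a ball is \cag\ with constants depending only on $d$, $\ell$, $M/\rho$ and the geometry of the ball. Two points to keep straight when quoting it: the max must include $|\beta|=0$ (your family may contain constants; the published lemmas do allow the constant term, so this is fine), and the published versions assume the hypotheses on a dilated ball and/or yield goodness on a smaller one --- harmless here, since you only need goodness on some neighborhood of $\x$ and may shrink $W$; also ``polynomial case plus Taylor approximation'' undersells the case $d>1$, which requires the induction-on-dimension argument of \cite{KM}. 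The trade-off: the paper's route is shorter because it leverages the packaged notion of nondegeneracy and cites \cite[Proposition 3.4]{KM} wholesale, while yours is more self-contained, makes the uniformity over $\cF$ explicit, and, combined with a further compactness argument in $\x$, would also deliver the uniform choice of $C,\alpha$ alluded to in the paper's margin note.
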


\begin{proof} Without loss of generality we can assume that $\cF$ contains constant functions.
Let $1,f_1,\dots,f_N$ be the basis of $\cF$, and consider the map $\vf = (f_1,\dots,f_N):U\to\R^N$.
Then for any open subset $U'$ of $U$, $\vf(U')$ is not contained in any proper affine subspace of $\R^N$ (otherwise, in view of the analyticity of all the functions, the same would be true
for $\vf(U)$, hence the functions would not be linearly independent). Therefore, again due to analyticity, $\vf$ is nondegenerate
at every point of $U$ (see \cite{KM} for a definition)\commargin{or should we define 
nondegeneracy here?}, and the conclusion follows from  \cite[Proposition 3.4]{KM}.
\end{proof}

Recall that given $m,n\in\N$ we fixed $k = m+n$ and defined
 $\Omega$ as in \equ{notation}.
In order to state the main measure estimate we need to introduce some more notation. 
Let
$$
\mathcal{W}\df\text{ the set of proper nonzero rational
subspaces of } \R^{k}\,.
$$
From here until the end of this section we let $\Vert \cdot \Vert$ stand
for  the  Euclidean norm on $\R^{k}$, 
which we extend from
$\R^{k}$ to its exterior algebra. 
For $V
\in \mathcal{W}$ and $g \in G$, let
$$
\ell_V(g) \stackrel{\mathrm{def}}{=} \Vert g (\vv_1 \wedge \cdots \wedge
\vv_j ) \Vert\,,
$$
where $\{\vv_1, \ldots, \vv_j\}$ is a generating set for $\Z^{k} \cap
V$;
note
that $\ell_V(g)$ does not depend on the choice of  $\{\vv_i\}$. 

\smallskip

Let us record the following elementary observation:

\begin{lem}
\name{lem: minkowski} There exists a constant $E$ depending only on $k$
with the following property: for any  $V
\in \mathcal{W}$ and $g \in G$ there exists a one-dimensional rational subspace
$V'\subset V$ such that $\ell_{V'}(g)\le E \ell_V(g)^{1/\dim(V)}$. Consequently, one has
$$\delta(g\Z^k) \le E \cdot \inf_{V
\in \mathcal{W}} \ell_V(g)^{1/\dim(V)}\,.$$
\end{lem}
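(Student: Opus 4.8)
The idea is to apply Minkowski's convex body theorem inside the subspace $V$, equipped with the lattice $\Lambda_V \df g(\Z^k \cap V)$, which is a lattice of rank $d \df \dim(V)$ living inside the $d$-dimensional subspace $gV \subset \R^k$. First I would observe that $\ell_V(g) = \|g(\vv_1 \wedge \cdots \wedge \vv_d)\|$ is precisely the covolume of $\Lambda_V$ inside $gV$ (with respect to the Euclidean structure that $\R^k$ induces on the subspace $gV$), since the norm on the exterior algebra is the one induced by the Euclidean norm on $\R^k$. By Minkowski's first theorem applied to $\Lambda_V$ in the $d$-dimensional Euclidean space $gV$, there is a nonzero vector $\vv \in \Lambda_V$ with $\|\vv\| \le c_d \cdot \operatorname{covol}(\Lambda_V)^{1/d} = c_d \cdot \ell_V(g)^{1/d}$, where $c_d$ depends only on $d$ (and hence only on $k$, since $d < k$). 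Writing $\vv = g\vu$ for some $\vu \in \Z^k \cap V$ and letting $V' = \R\vu$, we get a one-dimensional rational subspace $V' \subset V$ with $\ell_{V'}(g) = \|g\vu\| = \|\vv\| \le c_d \ell_V(g)^{1/d}$. Taking $E$ to be the maximum of the $c_d$ over $1 \le d \le k-1$ gives the first claim.

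For the second claim, note that $V'$ is a one-dimensional rational subspace, so $\Z^k \cap V' = \Z \vu_0$ for a primitive integer vector $\vu_0$, and $\ell_{V'}(g) = \|g\vu_0\|$. Since $g\vu_0 \in g\Z^k \nz$, we have $\delta(g\Z^k) \le \|g\vu_0\| = \ell_{V'}(g) \le E\, \ell_V(g)^{1/\dim(V)}$. As this holds for every $V \in \mathcal{W}$, taking the infimum over $V$ yields $\delta(g\Z^k) \le E \cdot \inf_{V \in \mathcal{W}} \ell_V(g)^{1/\dim(V)}$, as desired.

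The only mild subtlety — and the step I would be most careful about — is the identification of $\ell_V(g)$ with the covolume of $\Lambda_V$ in $gV$: one needs that the wedge norm of a generating set of $\Z^k \cap V$, after applying $g$, equals the $d$-dimensional volume of the fundamental parallelepiped of $g(\Z^k\cap V)$ inside the subspace $gV$. This is a standard fact about the induced inner product on $\bigwedge^d \R^k$, but it is worth stating explicitly. Everything else is a direct invocation of Minkowski's convex body theorem and the definitions of $\ell_V$ and $\delta$; there is no real obstacle here, which is consistent with the lemma being flagged as an \emph{elementary observation}.
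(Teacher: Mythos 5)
Your proposal is correct and follows essentially the same route as the paper: interpret $\ell_V(g)$ as the covolume of $g(\Z^k\cap V)$ in $gV$, apply Minkowski's theorem there to produce a short nonzero lattice vector, and take $V'$ to be the rational line through the corresponding integer vector, the second assertion following immediately. The only cosmetic point is that the vector $\vu$ produced need not be primitive, so one should either say $\ell_{V'}(g)\le\|g\vu\|$ (the primitive generator of $\Z^k\cap V'$ is at least as short) or take the shortest vector, which is automatically primitive; this does not affect the argument.
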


\begin{proof} Indeed, $\ell_V(g)$ by definition is the covolume of 
the lattice $g\Z^k\cap V$ in $V$, and Minkowski's Lemma, see \cite{Schmidt:book},
implies that such a lattice has a nonzero vector of length at most $\const\cdot \ell_V(g)^{1/\dim(V)}$
\commargin{Do we need more explanations?}
where the constant depends only on the dimension of $V$; thus one can choose
$V'$ to be the line passing through this vector.  \end{proof}

\smallskip
Here is the main measure estimate on which our argument is based:

\begin{thm}[\cite{dima tams}, Theorem 2.2]
\name{thm: friendly nondivergence}
Given $d,k\in\N$ and 
positive constants $C,D,\alpha$,
         there exists $C_1 = C_1(d,k,C,\alpha) > 0$
with the following property.
Suppose  
 $\til B\subset \R^d$ 
is a ball, $0 < \rho \le 1$,
and $h$ is a continuous map $\til B
\to G$ 
such that
for each $V \in \mathcal{W}$,
\begin{itemize}
\item[(i)]
the function
$ \ell_V\compose {h}$  is $(C,\alpha)$-good on $\til B
$,
           \end{itemize}
and
\begin{itemize} \item[(ii)]
\label{item: attain rho}
$ \ell_V \compose {h} (\x) \geq \rho^{\dim(V)}$ for some  $\x\in B = 3^{-(k-1)}\til B$.
          \end{itemize}
Then for any $\,0<
\varepsilon
\leq \rho$,
\eq{friendly nondiv}{
{\lambda\big(\big\{\x \in B: \delta\big({h}(\x)\Z^{k}\big)< \vre
\big\}\big)}\le C_1
\left(\frac{\vre}{\rho} \right)^{\alpha}{\lambda(B)} \,.
}
\end{thm}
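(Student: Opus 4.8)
\noindent\emph{Proof proposal.} Since Theorem~\ref{thm: friendly nondivergence} is quoted verbatim from \cite[Theorem~2.2]{dima tams}---a mild strengthening of the original Kleinbock--Margulis quantitative nondivergence estimate---the direct course of action is simply to invoke that reference. Were I to reprove it, I would follow the scheme of \cite[\S4--5]{KM}. The starting point is the elementary identity $\delta(g\Z^{k})=\min_{V}\ell_V(g)$, the minimum taken over rational \emph{lines} $V\subset\R^{k}$: a shortest nonzero vector of $g\Z^{k}$ may be taken primitive, hence equal to $gw$ with $w\in\Z^{k}$ spanning a rational line, and $\ell_{\Q w}(g)=\|gw\|$. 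Consequently
\[
\big\{\x\in B:\delta\big(h(\x)\Z^{k}\big)<\vre\big\}\ =\ \bigcup_{V\in\mathcal W,\ \dim V=1}\big\{\x\in B:\ell_V\compose h(\x)<\vre\big\}.
\]
For a single $V$, hypotheses (i) and (ii) would at once give $\lambda\big(\{\ell_V\compose h<\vre\}\cap B\big)\le C(\vre/\rho)^{\alpha}\lambda(B)$; the whole difficulty is that this is a union over infinitely many lines, for which a term-by-term summation is worthless.

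To circumvent this I would run an induction on $j=1,\dots,k-1$, estimating at stage $j$ the measure of the set $E_j\subseteq B$ of those $\x$ for which $\ell_V\compose h(\x)<\vre^{\dim V}$ for \emph{some} $V\in\mathcal W$ with $\dim V\le j$; the bound on $E_{k-1}$, which contains the set displayed above, is what is wanted, and since each stage shrinks the ambient ball by a factor $3$ this accounts for the relation $B=3^{-(k-1)}\til B$. In the passage from stage $j-1$ to stage $j$, around a point of $E_j\setminus E_{j-1}$ a Besicovitch-type selection produces a ball $B'$ on which one fixed subspace $V$ with $\dim V=j$ has $\ell_V\compose h<\rho^{j}$ throughout. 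Two cases: if $\sup_{3B'}\ell_V\compose h\ge\rho^{j}$, hypothesis (i) for this $V$ immediately bounds $\lambda\big(\{\x\in B':\ell_V\compose h(\x)<\vre^{j}\}\big)$ by $C(\vre/\rho)^{j\alpha}\lambda(3B')$; if instead $\ell_V\compose h<\rho^{j}$ on all of $3B'$, then---this being where hypothesis (ii) and the lattice structure of $\mathcal W$ enter, since (ii) forbids $\ell_V\compose h$ from staying below $\rho^{\dim V}$ on all of $B$---the submultiplicativity of the covolume functions, $\ell_{V\cap W}(g)\,\ell_{V+W}(g)\le\const_k\,\ell_V(g)\,\ell_W(g)$ for incomparable $V,W\in\mathcal W$, forces some subspace of dimension strictly less than $j$ to be ``relevant'' already on $3B'$, so that this contribution is covered by the inductive hypothesis on $3B'$. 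Patching the local bounds using the bounded multiplicity of the Besicovitch cover and iterating through all $k-1$ stages then yields \equ{friendly nondiv}.

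The step I expect to be the main obstacle is precisely the combinatorial bookkeeping in this inductive argument: organizing infinitely many rational subspaces into a Besicovitch cover of bounded multiplicity, and---more delicately---justifying the ``descent'', namely that a subspace whose covolume function is uniformly small on a whole ball forces a lower-dimensional subspace to be relevant there. That descent rests on the meet/join structure of $\mathcal W$ together with the submultiplicativity inequality, and it is in reconciling all the resulting constants and ball-shrinkage factors---so that the final exponent comes out exactly $\alpha$ and the threshold exactly $\rho$---that the real work lies. This is the content of \cite[\S4--5]{KM} and of its quantitative refinement in \cite{dima tams}, so in the present paper the sensible choice is to cite it rather than reproduce it.
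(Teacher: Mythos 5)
The paper gives no proof of this statement at all: it is imported verbatim as \cite[Theorem 2.2]{dima tams}, so your decision to simply invoke that reference is exactly what the paper does, and your accompanying sketch (reduction to rational lines, induction on dimension with Besicovitch covering and the submultiplicativity of the covolume functions) is a fair summary of the Kleinbock--Margulis-style argument that the cited paper refines. Nothing further is required here.
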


This theorem is 
similar to its earlier versions, see \cite{KM, friendly}; however
one crucial difference is the term $\rho^{\dim(V)}$ in (ii), as opposed to just $\rho$
independent on the dimension of $V$. It is this improvement that will enable us to
prove sharp results.

\smallskip
Now recall that in the theorems stated in the introduction we are given a connected
analytic submanifold $\cM$ of $\mr$. We are going to parameterize it by an analytic map
$F:U\to\mr$, where $U$ is a connected open subset of $\R^d$, $d = \dim(\cM)$.
Theorem \ref{thm: friendly nondivergence} will be applied to 
$
h: U\to G$ given by \eq{def h}{h(\x) = g_\vt u_{F(\x)}g\,,} where $\vt\in\fa$ and $g\in G$ are fixed,
and our goal will be to check conditions (i) and (ii) of Theorem \ref{thm: friendly nondivergence}
and then use 
\equ{friendly nondiv}.

The next corollary (from Proposition \ref{prop: anal good}) will help us handle condition (i):

\begin{cor}
\name{cor: i}
Let $U$ be a connected open subset  of $\R^d$, and let $F:U\to\mr$ be an analytic map. 
Then for any $\x_0\in U$ 
there exist $C,\alpha > 0$ 
and a neighborhood $W\ni \x_0$ contained in $U$ such that
for any  $V \in \mathcal{W}$, $\vt\in\fa$ and $g\in G$, functions
$ \x\mapsto \ell_V \big(g_\vt u_{F(\x)}g\big)$ are $(C,\alpha)$-good on 
 $W$.
\end{cor}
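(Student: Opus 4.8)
The goal is to show that for an analytic map $F:U\to\mr$ and any base point $\x_0\in U$, one can find $C,\alpha>0$ and a neighborhood $W\ni\x_0$ so that every function $\x\mapsto \ell_V\big(g_\vt u_{F(\x)}g\big)$ (as $V\in\mathcal W$, $\vt\in\fa$, $g\in G$ vary) is \cag\ on $W$. The natural strategy is to exhibit a single finite-dimensional space $\cF$ of analytic functions on $U$ that contains \emph{all} of these functions, and then invoke Proposition \ref{prop: anal good}, which produces $C,\alpha,W$ working simultaneously for every element of $\cF$.

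\textbf{Identifying the finite-dimensional space.} First I would unwind the definition of $\ell_V$. If $\{\vv_1,\dots,\vv_j\}$ generates $\Z^k\cap V$, then $\ell_V(h(\x))=\big\|h(\x)(\vv_1\wedge\cdots\wedge\vv_j)\big\|$, where $h$ acts on $\bigwedge^j\R^k$ via the $j$-th exterior power of the standard representation. So $\x\mapsto\ell_V(h(\x))$ is the Euclidean norm of the vector-valued function $\x\mapsto \big(\Lambda^j h(\x)\big)\vw$, where $\vw=\vv_1\wedge\cdots\wedge\vv_j$ is a fixed element of $\bigwedge^j\R^k$. Now $h(\x)=g_\vt u_{F(\x)}g$, and the only $\x$-dependence sits in $u_{F(\x)}$, whose entries are $1$, $0$, and the entries of $F(\x)$ — hence the entries of $\Lambda^j u_{F(\x)}$ are polynomials of degree $\le j\le k$ in the entries of $F(\x)$, with universal integer coefficients. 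Multiplying on the left by the fixed matrix $\Lambda^j(g_\vt)$ and on the right by $\Lambda^j(g)$, and then pairing with the fixed vector $\vw$, only takes fixed linear combinations. Consequently, for a fixed $j$, \emph{every} coordinate of $\x\mapsto\big(\Lambda^j h(\x)\big)\vw$ — no matter what $V$ (of dimension $j$), $\vt$, $g$ are — lies in the fixed finite-dimensional space
$$
\cF_j \df \operatorname{span}_\R\big\{\,\textstyle\prod_{(p,q)\in S} F_{pq} \ :\ S \text{ a multiset of entry-indices},\ |S|\le j\,\big\}\,,
$$
where $F_{pq}$ denotes the $(p,q)$ entry of $F$; this is a space of analytic functions on $U$ of dimension bounded in terms of $m,n$ only. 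Taking $\cF=\cF_k$ (which contains constants and all $\cF_j$, $j\le k-1$) gives one finite-dimensional space of analytic functions containing every coordinate function that can arise.

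\textbf{From coordinates to norms, and concluding.} Apply Proposition \ref{prop: anal good} to this $\cF$ and the point $\x_0$: obtain $C_0,\alpha>0$ and $W\ni\x_0$ on which every $f\in\cF$ is $(C_0,\alpha)$-good. Then $\x\mapsto\ell_V(h(\x))$ is the Euclidean norm of a vector whose every component is $(C_0,\alpha)$-good on $W$; by the standard fact that a finite sum (or max, up to dimensional constants) of $(C,\alpha)$-good functions — equivalently the Euclidean norm of a vector of $(C,\alpha)$-good functions — is $(C',\alpha)$-good with $C'$ depending only on $C$, $\alpha$ and the number of components (see \cite{KM, friendly}), we conclude that $\x\mapsto\ell_V(h(\x))$ is \cag\ on $W$ with $C$ depending only on $m,n$ and $C_0,\alpha$. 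Since nothing in $W,C,\alpha$ depended on the choice of $V,\vt,g$, this is exactly the claim.

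\textbf{Main obstacle.} The one genuinely substantive point is the uniformity over $V\in\mathcal W$: a priori there are infinitely many rational subspaces and the generating vectors $\vv_i$ are unbounded. The resolution, as sketched above, is the structural observation that \emph{regardless} of $V$, the $\x$-dependence of $\ell_V(h(\x))^2$ is polynomial of degree $\le 2k$ in the entries of $F(\x)$ with the \emph{coefficients} (depending on $V,\vt,g$) entering only linearly — so all these functions are trapped inside the single space $\cF$, and the $V,\vt,g$ get absorbed into the ``which linear combination'' that Proposition \ref{prop: anal good} is insensitive to. The other steps (exterior-power bookkeeping, and the sum/norm closure property of \cag\ functions) are routine.
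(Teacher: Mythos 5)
Your proposal is correct and follows essentially the same route as the paper: the paper likewise observes that the coordinates of $g_\vt u_{F(\x)}g(\vv_1\wedge\cdots\wedge\vv_j)$ in a fixed basis of the exterior power are linear combinations of products of matrix entries of $F$ with a uniformly bounded number of factors, hence lie in one finite-dimensional space of analytic functions, and then applies Proposition \ref{prop: anal good} together with the norm-of-good-functions lemma (\cite[Lemma 4.1]{friendly}) to conclude. Your treatment of the uniformity in $V,\vt,g$ (these data only affect which linear combination arises, not the ambient finite-dimensional space) is exactly the point the paper's argument rests on.
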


\commargin{Not sure if we need this in the form of corollary instead of just mentioning it}

\begin{proof} For any $\vv_1,\dots,
\vv_j \in\R^k$, $\vt\in\fa$ and $g\in G$, 
the coordinates of $$g_\vt u_{F(\x)}g(\vv_1 \wedge \cdots \wedge
\vv_j)$$ in some fixed basis of the $j$-th exterior power of $\R^k$ are linear combinations
of products of matrix elements of $F$, where the number of factors in the products is 
uniformly bounded
from above. Therefore all those coordinate functions are analytic and span a finite-dimensional space,
and the claim follows from Proposition \ref{prop: anal good}
\commargin{or should we give more explanation about coordinates $\to$ norms?}
 and \cite[Lemma 4.1]{friendly}.\end{proof}

We are now ready for the

\begin{proof}[Proof of Theorem \ref{thm: ge}] 
Recall that we are given $\lio$ (which we will write in the form $g\Z^k$, where $g\in G$ is fixed),  an unbounded $\mathcal{T}\subset \fa$, and 
a connected analytic
submanifold $\cM$ of $\mr$ which we will parameterize by $F:U\to\mr$ where $U\subset \R^d$ is  open and connected. 

For part (a) we are given $\beta \ge 0$ such that the set
\eq{def a1}{A_1\df\{\x\in U : \gamma_{{}_\cT}(u_{F(\x)} \Lambda)  \le \beta\}}
is nonempty. Define
\eq{def a2}{A_2\df\{\x\in U : \lambda(B\ssm A_1) = 0 \text{ for some neighborhood $B$ of $\x$}\}\,.}
We claim that 
\eq{equality}{A_2 = \overline{A_1}\cap U\,.}
Since  $A_2$ is obviously open and $U$ is connected, this implies that $A_2 = U$, and therefore
$A_1$ has full measure, which is what we were supposed to show.

It is clear from \equ{def a2} that $A_2\subset \overline{A_1}$.  To prove equality in \equ{equality},
 take $\x_0\in \overline{A_1}$, and choose a ball $B\ni\x_0$ such that $\tilde B \df 3^{k-1}B$ is contained in $W$
as in Corollary \ref{cor: i}. This way, condition (i) of Theorem  \ref{thm: friendly nondivergence}
for $h$   as in \equ{def h} (uniformly in $\vt\in\cT$) is taken care of. Then choose $\x'\in B\cap A_1$; 
\equ{def a1} implies that
for any $\beta' > \beta$ and all large enough 
$\vt\in \cT$, one has $ \delta\big(g_\vt u_{F(\x_0)}g\Z^k\big) \ge e^{-\beta' \|\vt\|}$. 
Applying Lemma \ref{lem: minkowski}, we can conclude that $\ell_V \circ h(\x_0) 
\ge (e^{-\beta' \|\vt\|}/E)^{\dim(V)}$ for any $V
\in \mathcal{W}$ and all large enough $\vt\in \cT$. 
Thus condition (ii) of Theorem  \ref{thm: friendly nondivergence} is satisfied with $\rho = e^{-\beta' \|\vt\|}/E$. Taking $\vre = e^{-\beta'' \|\vt\|} $ where $\beta'' > \beta'$ is arbitrary,
we apply \equ{friendly nondiv} and conclude that for large enough $\vt\in \cT$,
\eq{bc estimate}{\lambda\big(\big\{\x \in B: \delta\big(g_\vt u_{F(\x)}\Lambda\big)< e^{-\beta'' \|\vt\|}
\big\}\big)\le C_1
E^\alpha e^{-\alpha(\beta'' - \beta') \|\vt\|} \lambda(B)\,.}
The sum of the right hand sides of the above inequality over all $\vt\in \cT$ is finite
(recall that $\cT$ is assumed to be `uniformly discrete'), hence   almost all $\x\in B$
belong to at most finitely many sets as in the left hand side of \equ{bc estimate}.
Since $\beta''$ can be arranged to be as close to $\beta$ as one wishes,
it follows that $\gamma_{{}_\cT}(u_{F(\x)} \Lambda) \le \beta$ for  almost all $\x\in B$,
that is $\x_0\in A_2$.
\smallskip

Part (b) is proved along the same lines: define 
$$A_1\df\{\x\in U : g_{{}_\cT}(u_{F(\x)} \Lambda \text{ does not diverge faster than }\psi\}$$
and then $A_2$ by  \equ{def a2}; as before, the claim would follow from \equ{equality}.
Again,  take $\x'\in \overline{A_1}$ and $B\ni\x'$ such that $\tilde B \df 3^{k-1}B\subset W$
as in Corollary \ref{cor: i}, so that $h$ is as in \equ{def h} satisfies condition (i) of Theorem  \ref{thm: friendly nondivergence} for any $\vt$. Then  choose $\x_0\in B\cap A_1$. The latter implies that there exists
 $c > 0$ and an unbounded subset $\cT'$ of $\cT$ such that $$\delta(g_\vt u_{F(\x_0)}\Lambda) \ge  c\psi(\vt)\quad\forall\,\vt\in\cT'\,.$$ From Lemma \ref{lem: minkowski} it then follows that $$\ell_V \circ h(\x_0) 
\ge \big(c\psi(\vt)/E\big)^{\dim(V)}$$ for any $V
\in \mathcal{W}$ and any $\vt\in \cT'$. Applying \equ{friendly nondiv}, we  conclude that for 
any $0 < \vre < 1$ and $\vt\in \cT'$, 
\eq{small measure estimate}{\lambda\big(\big\{\x \in B: \delta\big(g_\vt u_{F(\x)}\Lambda\big)<\vre c\psi(\vt)
\big\}\big)\le C_1
E^\alpha \vre^\alpha \lambda(B)\,.}
But by definition of `divergence faster than $\psi$' and since $\cT'$ is unbounded,
for any positive $\vre$ there exists
 $\vt\in \cT'$ such that  $B\ssm A_1$ is contained in the set 
in the left hand side of \equ{small measure estimate}. Hence $B\ssm A_1$ has measure zero,
which proves that $\x'\in A_2$. 
\end{proof}



\section{\di\ applications}\name{appl}
\subsection{Proof of Theorem \ref{thm: vwa} }\name{pf vwa}
In order to connect Theorem \ref{thm: vwa} with  Theorem \ref{thm: ge}, we take $\cT = \cR$
as in \equ{def r}, and denote
$$g_{t} \df
\diag(e^{t/m}, \ldots,
e^{ t/m}, e^{-t/n}, \ldots,
e^{-t/n})\,.$$
According to \cite[Theorem 8.5]{KM LL}, \equ{de} holds if and only if
the inequality
$$
\delta(g_t u_{Y}\Z^k) < e^{-\frac{mv - n}{n(mv + 1)}t}
$$
is satisfied for an unbounded set of $t\in\R_+$. Consequently, one has $$\gamma_{{}_\cR}(u_Y \Z^k) = \frac{\frac mn\omega(Y) - 1}{ m\omega(Y) + 1 }\,,$$
and therefore Theorem \ref{thm: vwa}(a) follows immediately from Theorem \ref{thm: ge}(a).

The connection between parts (b) of these theorems is analogous. Given $\varphi$ as in 
Definition \ref{dfn sing}, define $N= N(t)$ by
\eq{def n}{
e^{ \frac{m+ n}{mn }t} = N^{1 + n/m} \varphi(N)^{-1}
}
\commargin{even just assuming boundedness it can be done}(this is well defined in view of the continuity and monotonicity of $\varphi$), 
and then let \eq{def psi}{\psi(t) = e^{-t/n} N \,.}
\commargin{do we need more explanations?}
Then, for any $c>0$,  $$e^{t/m}\frac{c \varphi(N)}{N^{m/n}} = e^{-t/n}c N = c\psi(t)\,;$$
thus the solvability
of \equ{singular} is equivalent to $\delta(g_t u_{Y}\Z^k) <  c\psi(t)$. 
Hence $Y$ is $\varphi$-singular if and only if $\gamma_{{}_\cR}(u_Y \Z^k)$ diverges faster than $\psi$
(here we identify $\cR$ with $\R_+$ and view $\psi$ as a function on $\cR$), which readily proves
Theorem \ref{thm: vwa}(b).
Note that given $\psi$ one can define $N$ by  \equ{def n} and then $\psi$ by  \equ{def psi},
thus there is at most one function $\varphi$ 
for which both \equ{def n} and \equ{def psi} hold. For example, $\varphi \equiv \const$
would give rise to $N(t) = e^{t/n}$ and thus $\psi(t)  \equiv \const$; 
and \commargin{should say it better}the faster is the decay of $\varphi$, the 
 faster would be the decay of $\psi$.
 
\subsection{Examples }\name{examples}  Here we take $m = 1$, that is, consider $\R^n$ as
the space of row vectors (linear forms). Let an $s$-dimensional affine subspace $\cL$ of $ \R^n$ be parametrized by
\eq{L}{\x\mapsto(\x,
\x A' + \va_0)\,,}
where 
$A'\in M_{s,n-s}$ 
and $\va_0\in\R^{n-{s}}$ (here both $\x$ and $\va_0$ are
row vectors). Denote by $\tilde \x$ the row vector $(1,\x)\in\R^{n+1}$, and put
$A = \begin{pmatrix}
\va_0 \\ A'
\end{pmatrix}\in M_{s+1,n-s}$. 
It is easy to show, see  \cite[Lemma 5.4]{dima tams}, that all points of $\cL$ have \de s
at least as big as $\omega(A)$; in other words, a good rational approximation to $A$ gives rise to 
a good approximation to all points of $\cL$.  Choosing subspaces $\cL$ for which  $\omega(A)$
is arbitrary large one can produce examples of `irrational' subspaces consisting of arbitrarily well
approximable vectors. 
Similarly one can construct nontrivial examples of  subspaces consisting of $\varphi$-singular vectors.
For the sake of completeness let us work out those examples here, following the argument of 
 \cite[Lemma 5.4]{dima tams}.
Equation
\equ{L} can be rewritten as $\x\mapsto(\x,
\tilde \x A)$. Suppose $A$ is $\varphi$-singular (it is known from the work of Khintchine that
nontrivial examples of such matrices exist for any $\varphi$).
Then for any $c>0$ there is $N_0$ such that for all
$N \geq N_0$ one
can find $\vp = (p_0,p_1,\dots,p_s)\in\Z^{{s}+1}$ and 
$\vq \in \Z^{n-s}\nz$ such that
\eq{singular A}{
 \|A\vq + \vp\|  < \frac{c \varphi(N)}{N^{m/n}}  \text{ and }
\|\vq\|  < c N\,.}
Now take  any
$\x\in \R^{s}$, denote $(p_1,\dots,p_s)$ by $\vp'$  and write
$$
\left|\,p_0 + (\x,
\tilde\x A) \begin{pmatrix} \vp' \\ \vq
\end{pmatrix}\right|  = |p_0 + \x\vp' + \tilde\x A\vq| = |\tilde\x( A\vq +
\vp)|\le 
\left\|\tilde\x\right\|
\| A\vq + \vp\|\,.
$$ 
Therefore one has 
$\left|\,p_0 + (\x,
\tilde\x  A)\begin{pmatrix} \vp' \\ \vq
\end{pmatrix}\right| \le   \frac{C_1c \varphi(N)}{N^{m/n}}\,,
$
where $C_1$ depends only on $\x$.
Also, it follows from \equ{singular A}
that $\|\vp\|$ is bounded from above by $C_2|\vq\|$,
where $C_2$ depends only on $A$; hence
 $\left\|\begin{pmatrix} \vp' \\ \vq
\end{pmatrix}\right\| < C_2c N$. Since $c$ can be chosen to be arbitrary small,
it follows that $(\x,
\tilde \x A)$ is $\varphi$-singular for all $\x$.

\subsection{A matrix analogue of Mahler's Conjecture}\name{mahler}
We now describe
an  application of  Theorem  \ref{thm: vwa}  suggested to the author by G.A.\ Margulis. 
Given $m,n\in\N$, consider the $m^2$-dimensional submanifold of $M_{m,mn}$ given by 
$$ \{\begin{pmatrix}X & X^2 & \dots & X^n\end{pmatrix} \ :  \ X\in M_{m,m}\},$$
which is a matrix analogue of  \equ{mahler}.
Then one can ask\footnote{This question was asked during the author's  talk at Moscow State University.} whether 
the above manifold is extremal. 
The answer turns out to be affirmative  and follows from the
 dichotomy established in Theorem  \ref{thm: vwa}. In fact a more general statement can be proved:

\begin{cor}
\name{cor: moscow}
Given $n\in\N$ and $v\ge n$, let  $\vf = (f_1,\dots,f_n)$ be an analytic map from a neighborhood
of $x_0\in\R$ to $
M_{1,n}$, and suppose that $\omega\big(\vf(x_0)\big) \le v$.
Take $m\in \N$, and let $U$ be a neighborhood of $x_0I_m\in M_{m,m}$ such that 
the map \eq{def F}{F: X\mapsto\begin{pmatrix} f_1(X)  & \dots & f_n(X)\end{pmatrix}\in M_{m,mn}}
is defined for $X\in U$. Then  $\omega\big(F(X)\big) \le v$ for a.e.\ $X\in U$.
\end{cor}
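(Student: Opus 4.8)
The plan is to deduce this from the dichotomy of Theorem \ref{thm: vwa}(a), applied to the image $\cM := F(U)\subset M_{m,mn}$: since $v\ge n = mn/m$, that theorem (in the parameterized form furnished by the proof of Theorem \ref{thm: ge}(a), which works directly with an analytic map $F:U\to M_{m,mn}$ and concludes for almost every point of $U$) reduces the claim to exhibiting a \emph{single} $X_0\in U$ with $\omega\big(F(X_0)\big)\le v$, and the obvious candidate is $X_0 = x_0 I_m$. One first checks that $F$ is genuinely analytic near $x_0 I_m$: each $f_i$ has a Taylor series $f_i(x)=\sum_k a_{i,k}(x-x_0)^k$ convergent near $x_0$, and substituting the matrix $X$ for the scalar $x$ gives matrix power series $f_i(X)=\sum_k a_{i,k}(X-x_0I_m)^k$ which converge for $X$ near $x_0I_m$ and whose entries are real-analytic in the $m^2$ entries of $X$; in particular $f_i(x_0I_m)=f_i(x_0)I_m$. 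Hence, writing $(c_1,\dots,c_n):=\vf(x_0)$, the candidate matrix is the Kronecker-type block matrix $F(x_0I_m)=\begin{pmatrix} c_1 I_m & c_2 I_m & \cdots & c_n I_m\end{pmatrix}\in M_{m,mn}$.

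The crux of the argument is the elementary inequality $\omega\big(F(x_0I_m)\big)\le\omega\big(\vf(x_0)\big)$ (in fact equality holds, but only the upper bound is needed). Working with sup-norms, which is harmless since all the relevant quantities are norm-independent, I would split a column vector $\vq\in\Z^{mn}$ into $n$ blocks $\vq^{(1)},\dots,\vq^{(n)}\in\Z^m$ and regroup by row, setting $\vr^{(i)}:=\big((\vq^{(1)})_i,\dots,(\vq^{(n)})_i\big)\in\Z^n$ for $i=1,\dots,m$. Then the $i$-th coordinate of $F(x_0I_m)\vq$ equals $\vf(x_0)\vr^{(i)}$, while $\|\vq\|_\infty=\max_i\|\vr^{(i)}\|_\infty$ and $\dist\big(F(x_0I_m)\vq,\Z^m\big)=\max_i\dist\big(\vf(x_0)\vr^{(i)},\Z\big)$. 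Consequently every solution $\vq$ of \equ{de} for $Y=F(x_0I_m)$ with exponent $w$ yields, on taking the block $i=i(\vq)$ attaining the maximal norm, a nonzero $\vr^{(i(\vq))}\in\Z^n$ with $\dist\big(\vf(x_0)\vr^{(i(\vq))},\Z\big)<\|\vr^{(i(\vq))}\|_\infty^{-w}$; since the solutions $\vq$ may be chosen with $\|\vq\|_\infty\to\infty$, a pigeonhole over the finitely many row indices produces a single $i^*$ for which infinitely many \emph{distinct} such $\vr^{(i^*)}$ occur, so $\omega\big(\vf(x_0)\big)\ge w$. Letting $w$ tend to $\omega\big(F(x_0I_m)\big)$ gives the asserted inequality, and combined with the hypothesis $\omega\big(\vf(x_0)\big)\le v$ this yields $\omega\big(F(x_0I_m)\big)\le v$.

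This block/pigeonhole comparison is the only step carrying real content: the analyticity of $F$ and the passage from a single point to almost every point are immediate from the material already developed, so I would concentrate the writing there. To finish, one invokes Theorem \ref{thm: vwa}(a) — equivalently, the $\cT=\cR$ case of Theorem \ref{thm: ge}(a) applied to the parameterization $F:U\to M_{m,mn}$, via the monotone correspondence between $\omega(Y)$ and $\gamma_{{}_\cR}(u_Y\Z^k)$ recorded in \S\ref{pf vwa} — to conclude $\omega\big(F(X)\big)\le v$ for almost every $X\in U$. Specializing $\vf(x)=(x,x^2,\dots,x^n)$ and choosing $x_0$ not \vwa\ (for instance $x_0$ algebraic of degree $>n$, or almost any $x_0\in\R$ by Sprind\v zuk's theorem), and using that $M_{m,m}$ is connected so the local statement propagates globally, recovers the extremality of the matrix Veronese manifold $\{(X,X^2,\dots,X^n):X\in M_{m,m}\}$ that motivated this corollary.
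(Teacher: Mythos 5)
Your proposal is correct and follows essentially the same route as the paper: the key step in both is the row-block decomposition with a pigeonhole over the $m$ row indices to show $\omega\big(F(x_0I_m)\big)\le\omega\big(\vf(x_0)\big)$, after which the conclusion follows from Theorem \ref{thm: vwa}(a) (in its parameterized form, with $v\ge n = mn/m$). Your added remarks on the analyticity of $X\mapsto f_i(X)$ and on the degenerate-image issue are harmless refinements of what the paper leaves implicit.
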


In particular, if $\vf$ satisfies  \equ{cond on f}, then  $\omega\big(\vf(x)\big) = n$
for almost all $x$ in view of \cite[Theorem A]{KM}, hence the manifold $\{F(X): X\in U\}$ is extremal.

\begin{proof}[Proof of Corollary \ref{cor: moscow}] Note that $\omega\big(F(xI_m)\big) > v$ is equivalent to the existence of $w > v$ such that 
there are infinitely many 
$\vq = (\vq_1,\dots,\vq_n)\in(\Z^m)^n$ with
\eq{matrix de}{\dist\big(f_1(xI_m)\vq_1+\dots+f_n(xI_m)\vq_n,\Z^m\big) < 
\|\vq\|^{-w}\,.}
(Here it is convenient to define $\|\cdot\|$ and `$\dist$' via the supremum norm.)
Write $\vq_i = (q_{i,1},\dots,q_{i,m})$ and $\vq^{(j)}= (q_{1,j},\dots,q_{n,j})$, and choose
$j = 1,\dots,m$ such that $ \|\vq\| = \|\vq^{(j)}\|$ for infinitely many $\vq$ satisfying \equ{matrix de}.
Then, since $ f_i(xI_m) = f_i(x)I_m$ for every $i$,  by looking at the $j$th component of vectors in the left hand side of \equ{matrix de} one concludes
that $\dist\big(f_1(x)q_{1,j}+\dots+f_n(x)q_{n,j},\Z\big) < \|\vq^{(j)}\|^{-w}$ for infinitely many $\vq^{(j)}\in \Z^n$, which implies $\omega\big(\vf(x)\big) > v$. Thus we have shown that $\omega\big(F(xI_m)\big) \le\omega\big(\vf(x)\big)$ whenever $\vf(x)$ is defined (the opposite inequality is also easy to show, although not needed for our purposes). The claim is therefore an immediate consequence
of Theorem  \ref{thm: vwa}(a). \end{proof}

Similarly one can conclude, using \cite{sing} and Theorem  \ref{thm: vwa}(b),  that under the assumption  \equ{cond on f} $F(X)$ as in \equ{def F} is not singular for a.e.\ $X$.

\subsection{Other applications }\name{other}  Here we describe two more corollaries from 
Theorem \ref{thm: ge} which deal with  \di\  properties more general than those
discussed in  Theorem \ref{thm: vwa}. 

\subsubsection{} For $\x = (x_i)\in\R^\ell$   define 
$$
\Pi(\x) \df \prod_{i = 1}^\ell |x_i|\quad \text{ and }\quad\Pi_{+}(\x) \df
\prod_{i = 1}^\ell \max(|x_i|, 1)\,.
$$
Then say that \amr\ is {\sl \vwma\/} (VWMA) if for some $\delta > 0$ there are infinitely many $\vq\in \Z^n$
such that 
$$\Pi(Y\vq  + \vp) < \Pi_{+}(\vq)^{-(1+\delta)}$$
for some $\vp\in\Z^m$.
Since $\Pi( Y\vq + \vp) \le \|Y\vq + \vp\|^m$ and
$\Pi_{+}(\vq)
\le
\|\vq\|^n$ for
$\vq\in\Z^n\nz$, VWA implies VWMA. Still it can be easily shown that  Lebesgue-a.e.\ $Y$ is not VWMA\footnote{Also it is known \cite{SW} that $Y$ is  VWMA
iff so is the transpose of $Y$.}. To show  that a submanifold $\cM$ of $\mr$ is {\sl strongly extremal\/}, that is,
its almost every point is not VVMA, is usually more difficult  than to prove its extremality. For example, 
the multiplicative version of Mahler's Conjecture, that is, the strong extremality of the curve 
\equ{mahler} 
which was conjectured by Baker in the 1970s, has not been solved until the introduction of the methods of homogeneous dynamics to metric \da, and up to the present time there is no other proof than
the one from \cite{KM}. 
Note that applications of dynamics to multiplicative \di\
problems are based on the fact that $Y$ is VWMA if and only if 
$\gamma_{\fa}(u_Y\Z^{m+n} ) = 0$; that is, the orbit of the lattice $u_Y\Z^{k} $
under the action of the whole semigroup $\{g_\vt : \vt\in\fa\}$ has sublinear growth. 
This was shown in \cite{KM} and  \cite{friendly} in the cases $m = 1$ and $n = 1$ respectively.
The proof for the general case can be found in \cite{KMW}, see also \cite[Theorem 9.2]{KM LL} 
for a related statement. Therefore from Theorem \ref{thm: ge} one derives

\begin{cor}
\name{cor: strex}
A connected analytic submanifold  of $\mr$  is strongly extremal if and only if
it contains at least one not VWMA point.
\end{cor}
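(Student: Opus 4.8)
The plan is to read off Corollary \ref{cor: strex} from Theorem \ref{thm: ge}(a) by taking the full cone $\cT=\fa$ and $\beta=0$, using the dynamical description of the VWMA property. Recall from the discussion preceding the corollary that $Y\in\mr$ fails to be VWMA precisely when $\gamma_{\fa}(u_Y\Z^{k})=0$, i.e.\ when the orbit of $u_Y\Z^{k}$ under $\{g_\vt:\vt\in\fa\}$ has sublinear growth; this is the equivalence established in \cite{KMW} (and in \cite{KM,friendly} in the cases $m=1$ and $n=1$). I also record the elementary fact that the growth exponent is always nonnegative: by Mahler's Compactness Criterion --- equivalently, by Minkowski's Lemma --- the function $\delta$ is bounded above on $\Omega$, so $-\log\delta(g_\vt\Lambda)$ is bounded below, and hence $\gamma_{{}_\cT}(\Lambda)\ge 0$ for every $\lio$ and every unbounded $\cT\subset\fa$. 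In particular $Y$ is not VWMA if and only if $\gamma_{\fa}(u_Y\Z^{k})\le 0$.

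Granting this, the proof is short. One direction is trivial: a strongly extremal submanifold $\cM$ is by definition one almost every point of which is not VWMA, so (as $\cM$ is nonempty) it certainly contains at least one not VWMA point. For the converse, suppose the connected analytic submanifold $\cM\subset\mr$ contains a point $Y_0$ that is not VWMA, so that $\gamma_{\fa}(u_{Y_0}\Z^{k})=0\le 0$. Apply Theorem \ref{thm: ge}(a) with $\Lambda=\Z^{k}$, with the unbounded set $\cT=\fa$, and with $\beta=0$: the conclusion is that $\gamma_{\fa}(u_Y\Z^{k})\le 0$ for almost every $Y\in\cM$. Combined with the bound $\gamma_{\fa}(u_Y\Z^{k})\ge 0$ recorded above, this forces $\gamma_{\fa}(u_Y\Z^{k})=0$, i.e.\ $Y$ is not VWMA, for almost every $Y\in\cM$. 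Hence $\cM$ is strongly extremal.

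Within Corollary \ref{cor: strex} itself there is no real obstacle: its whole content is the specialization $\cT=\fa$, $\beta=0$ of the dichotomy in Theorem \ref{thm: ge}(a), just as Theorem \ref{thm: vwa} is its specialization to $\cT=\cR$. The genuine work sits in the two inputs --- Theorem \ref{thm: ge} (via the quantitative nondivergence estimate of Theorem \ref{thm: friendly nondivergence}) and the equivalence ``$Y$ not VWMA $\Longleftrightarrow \gamma_{\fa}(u_Y\Z^{k})=0$'' from \cite{KMW}. The only subtlety in assembling the corollary is the nonnegativity of $\gamma_{{}_\cT}$, which is what lets one upgrade the one-sided conclusion $\gamma_{\fa}(u_Y\Z^{k})\le 0$ produced by Theorem \ref{thm: ge}(a) to the exact equality that defines ``not VWMA''.
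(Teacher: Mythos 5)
Your proof is correct and follows essentially the paper's own route: the corollary is exactly Theorem \ref{thm: ge}(a) applied with $\Lambda=\Z^k$, $\cT=\fa$ and $\beta=0$, combined with the equivalence from \cite{KMW} (stated just before the corollary) that $Y$ is not VWMA precisely when $\gamma_{\fa}(u_Y\Z^k)=0$, which is the reading you correctly adopt. Your additional remark that $\gamma_{{}_\cT}\ge 0$ always, so that the one-sided conclusion $\gamma_{\fa}(u_Y\Z^k)\le 0$ becomes the equality defining ``not VWMA'', is a harmless piece of bookkeeping the paper leaves implicit.
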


The case $\min(m,n) = 1$ of the above statement is established in \cite{dima gafa}. 

\subsubsection{} 
Let us generalize Definition \ref{dfn sing} as follows: suppose $\varphi:\fa_+\to\R_+$ is a 
function which is continuous and nonincreasing in each variable; that is, 
$$
\varphi(t_1,\dots,t_i,\dots,t_k) \ge \varphi(t_1,\dots,t_i',\dots,t_k) \quad\text{whenever} \quad t_i\le t_i' \,.$$
Also let $\cT$ be an unbounded subset of $\fa$. Now 
say that  \amr\  is
{\sl $(\varphi,\cT)$-singular\/} if for any $c>0$ there is $N_0$ such that for all $\vt\in\cT$ with 
$\|\vt\| \geq N_0$ one
can find 
$\vq \in \Z^n\nz$ and $\vp\in\Z^m$ with
\[
\begin{cases}
|Y_i\vq - p_i| < c \varphi(\vt) e^{-t_i}\,,\quad &i = 1,\dots,m
 \\  
\ \ |q_j| < c \varphi(\vt)  e^{t_{m+j}}\,,\quad &j = 1,\dots,n
\end{cases}
\]
In other words, those systems $Y$ of linear forms $Y_1,\dots,Y_m$ admit a drastic improvement
of the multiplicative (Minkowski's) form of Dirichlet's Theorem, see \cite{di} or \cite{nimish mult}.
It is not hard to show that the set of $(\varphi,\cT)$-singular matrices has Lebesgue measure zero
for any unbounded $\cT$. Arguing as in the proof of Theorem \ref{thm: vwa}(b), see \S \ref{pf vwa},
one can relate $(\varphi,\cT)$-singularity of $Y$ to  the trajectory $g_{{}_\cT'}u_Y\Z^{k} $ being 
divergent faster than $\psi$, where $\psi$ and $\cT'$ are determined by $ \varphi$ and $\cT$.
Thus from Theorem \ref{thm: ge} one can derive

\begin{cor}
\name{cor: t-sing} Let $\varphi$ and $\cT$ be as above, and suppose
a connected analytic submanifold $\cM$  of $\mr$ contains $Y_0$ which is not
$(\varphi,\cT)$-singular; then $Y$ is not
$(\varphi,\cT)$-singular for almost every  $Y\in\cM$.
\end{cor}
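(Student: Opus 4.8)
The plan is to reduce Corollary~\ref{cor: t-sing} to Theorem~\ref{thm: ge}(b) applied with $\Lambda=\Z^{k}$, by recasting $(\varphi,\cT)$-singularity as fast divergence of an orbit on $\Omega$, exactly in the spirit of the passage from $\varphi$-singularity to $\psi$-divergence carried out in \S\ref{pf vwa}.

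The first and main step is to set up this dictionary. A generic vector of the lattice $u_Y\Z^{k}$ is $\begin{pmatrix}Y\vq+\vp\\\vq\end{pmatrix}$ with $\vp\in\Z^{m}$, $\vq\in\Z^{n}$, and applying $g_\vt$ for $\vt\in\fa$ produces the vector whose coordinates are $e^{t_i}(Y_i\vq+p_i)$ for $i=1,\dots,m$ and $e^{-t_{m+j}}q_j$ for $j=1,\dots,n$. Measuring $\delta$ with the supremum norm, the inequality $\delta(g_\vt u_Y\Z^{k})<c\,\varphi(\vt)$ is therefore witnessed precisely by a nonzero integer pair $(\vp,\vq)$ satisfying both systems of inequalities in the definition of $(\varphi,\cT)$-singularity (after the harmless replacement $\vp\mapsto-\vp$, and once one checks that the witnessing pair may be taken with $\vq\ne0$). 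Consequently $Y$ is $(\varphi,\cT)$-singular if and only if for every $c>0$ one has $\delta(g_\vt u_Y\Z^{k})<c\,\varphi(\vt)$ for all $\vt\in\cT$ of large enough norm, that is, if and only if $g_{{}_\cT}u_Y\Z^{k}$ diverges faster than $\varphi|_\cT$ in the sense of Definition~\ref{dfn div}. Since that definition (and hence Theorem~\ref{thm: ge}(b)) requires the comparison function to be bounded, before invoking it I would, if necessary, replace the pair $(\varphi,\cT)$ by an equivalent pair $(\psi,\cT')$ with $\psi$ bounded on an unbounded $\cT'\subset\fa$; the continuity and coordinatewise monotonicity of $\varphi$ are what make such a reparametrization possible, just as the monotonicity of $\varphi$ was used to trade $N$ for $t$ through \equ{def n}--\equ{def psi} in \S\ref{pf vwa}.

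Granting the dictionary, the conclusion is immediate: the hypothesis furnishes $Y_0\in\cM$ that is not $(\varphi,\cT)$-singular, hence $g_{{}_{\cT'}}u_{Y_0}\Z^{k}$ does not diverge faster than $\psi$; Theorem~\ref{thm: ge}(b), applied with this $\psi$, this $\cT'$, $\Lambda=\Z^{k}$ and the given connected analytic submanifold $\cM$, then yields that $g_{{}_{\cT'}}u_Y\Z^{k}$ does not diverge faster than $\psi$ for almost every $Y\in\cM$, which by the dictionary means that almost every $Y\in\cM$ fails to be $(\varphi,\cT)$-singular. The only real work lies in the second paragraph: making the equivalence between $(\varphi,\cT)$-singularity and fast divergence fully precise, in particular producing a genuinely bounded $\psi$ on a suitable unbounded $\cT'$ and dispensing with the $\vq\ne0$ technicality; once that is done, everything reduces to a single application of Theorem~\ref{thm: ge}(b), which already packages the quantitative nondivergence input.
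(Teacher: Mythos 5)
Your reduction is exactly the paper's: it recasts $(\varphi,\cT)$-singularity as the trajectory $g_{{}_\cT}u_Y\Z^{k}$ diverging faster than a bounded $\psi$ determined by $\varphi$ and $\cT$ (the paper likewise only sketches this dictionary, "arguing as in \S\ref{pf vwa}"), and then applies Theorem \ref{thm: ge}(b) with $\Lambda=\Z^{k}$. Your treatment of the $\vq\neq 0$ point and of boundedness of the comparison function is at least as explicit as the paper's, so the proposal is correct and follows essentially the same route.
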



\section{Generalizations and open questions}\name{general}

It seems natural to conjecture that other \di\ or dynamical properties might exhibit a dichotomy
of the same type as discussed in this paper. Here are some examples. For a 
function $\varphi:\N\to\R_+$  one says that \amr\ is 
{\sl \pa\/}  if  there are infinitely many
$\vq\in \Z^n$ such that 
$
 \|Y\vq + \vp\|   \le \varphi(\|\vq\|) $
for some
$\vp\in\Z^m$. 
(This definition is slightly different from the one
used in \cite{KM LL}, where powers of norms were considered.) 
The Khintchine-Groshev theorem gives the precise condition on the function
$\varphi$ under which the set of $\varphi$-approximable matrices has full
measure. 
 Namely, if $\varphi$ is non-increasing (this assumption can be removed in higher dimensions
but not for $n = 1$), then Lebesgue measure of the set of \pa\ 
\amr\
  is 
zero if \eq{kgt}
{\sum_{k = 1}^\infty {k^{n-1}\varphi(k)^m} < \infty\,,} 
and full otherwise. Now suppose \equ{kgt} holds and a connected analytic submanifold $\cM$ of $\mr$
contains a point which is not \pa; is it true that almost all $Y\in\cM$ are not \pa, or 
at least not $\tilde \varphi$-approximable, where $\tilde \varphi = C\varphi$
 with $C > 0$ depending on $Y$? Our methods are not powerful enough to answer this question. 
 Note that  \cite{KM LL}
provides a dynamical interpretation of $\varphi$-approximability along the lines of 
Definition \ref{dfn ge}. Namely, the choice of  $\varphi$ as above uniquely defines a continuous function $r:[t_0,\infty)\mapsto \R_+$ such that \amr\ is 
 \pa\ if and only if  there exist arbitrarily large positive  $t$
such that 
$$
\delta(g_{t}u_Y\Z^k)< r(t)\,.
$$ 


\smallskip

Likewise, one can modify the definition of $\varphi$-singularity by fixing the constant $c$; 
as in the previous example, it is not clear if the `almost all versus no' dichotomy would still hold. Here is an important special case. 
Given positive $\vre < 1$, 
one says that 
Dirichlet's Theorem {\sl can be $\vre$-improved\/} for $Y$, 
writing   $Y\in\DI_\vre$, 
if 
for every sufficiently
large $t$  one can find
$\vq  \in \Z^n\nz$ and 
$\vp \in \Z^m$ with
$$
\|Y\vq - \vp\|
 < \vre e^{-t/m} 
  \ \ \ \mathrm{and}  
\ \ \|\vq\|
 < \vre  e^{t/n} 
\,.$$
Clearly $Y$ is  singular iff it belongs to $\cup_{\vre > 0}\DI_\vre$. It was proved
by Davenport and Schmidt \cite{Davenport-Schmidt2} that the sets $ \DI_\vre$
have Lebesgue measure zero. In fact, the latter statement follows from the ergodicity
of the $G$-action on $\ggm$: arguing as in \S \ref{pf vwa}, it is not hard to see
that $Y\in\DI_\vre$ iff the $g_{{}_\cR}$-orbit of $u_Y \Z^k$ misses a certain nonempty 
open subset of $\ggm$. This motivates questions extending both Theorem \ref{thm: dani} and  (in some direction) Theorem \ref{thm: ge}(b). Namely, 
let $G$, $\Gamma$, $a$, $\cA$ and $\{u_t: t\in\R\}$ be as in Theorem \ref{thm: dani}, and  suppose 
that for some $t_0\in\R$ and $x\in\ggm$, the trajectory
 $ \cA u_{t_0}x$ has a limit point in an open subset $W$ of $\ggm$. Is it true that
the intersection of  $\overline{ \cA u_{t }x} $ with $ W $ is nonempty  for almost all $t\in\R$? 
Or else let $G$ and $\Gamma$ be as in
 \equ{notation}, 
take an open subset  $W$ of $\ggm$ and   $\cT\subset \fa_+$,  and suppose that 
a connected analytic submanifold $\cM$ of $\mr$
contains a point $Y_0$ such that $g_{{}_\vt}u_{Y_0} \Z^k\in W$ for an unbounded set of $\vt\in\cT$;
then is the same  true for almost every $Y\in\cM$? An affirmative answer to the latter question would imply 
that for any positive $\vre < 1$ and any $\cM$ as above,  the set $\cM\ssm\DI_\vre $ is either
empty or of full measure. Note that it follows from the methods of proof of \cite{nimish} that
almost all $Y\in\cM$ are not in $\DI_\vre $ for any $\vre < 1$ whenever $\cM$ contains a point
$Y_0$ such that  the $g_{{}_\cR}$-orbit of $u_{Y_0} \Z^k$ is dense in $\ggm$.

\smallskip

Finally we would like to mention that the assumption of analyticity of manifolds $\cM$ in the main
results of the paper cannot be replaced by differentiability.  Indeed, it is not
hard to smoothly glue an extremal  $C^\infty$ curve in $\R^n$ to a rational line. 
On the other hand, one of  important advantages of the use of the quantitative nondivergence
method 
has been a possibility to treat measures on $\mr$  other
than volume measures on analytic submanifolds. 
The reader is referred to  \cite{friendly, dima tams, di} and a recent paper
\cite{KMW} for a description of more general classes of measures  allowing a similar `almost all vs.\ no' dichotomy.

\end{document}

\dt\ (hereafter abbreviated by `DT') 
on simultaneous diophantine approximation 
states that for any \amr\ (viewed as a system
of $m$ linear forms in $n$ variables) 
and for any 
$t > 0$ there exist
$\vq = (q_1,\dots,q_n) \in \Z^n\nz$ and 
$\vp = (p_1,\dots,p_m)\in \Z^m$ satisfying the following system of inequalities:
\eq{dt}{
\|Y\vq - \vp\|
< e^{-t/m}
  \ \ \ \mathrm{and}  
\ \ \|\vq\|
\le e^{t/n}
\,.}
Here and hereafter, unless otherwise specified, $\|\cdot\|$ stands for
the norm on $\R^k$ given by  
$\|\x\| = \max_{1\le i \le k}|x_i|$.

Given $Y$ as above and positive $\vre < 1$, 
we will say that 
DT {\sl can be $\vre$-improved\/} for $Y$, 
and write $Y\in\DI_\vre(m,n)$, or  $Y\in\DI_\vre$ when the
dimensionality is clear from the context, 
if 
for every sufficiently
large $t$  one can find
$\vq  \in \Z^n\nz$ and 
$\vp \in \Z^m$ with
\eq{di}{
\|Y\vq - \vp\|
 < \vre e^{-t/m} 
  \ \ \ \mathrm{and}  
\ \ \|\vq\|
 < \vre  e^{t/n} 
\,.}

Let $k=m+n$, and let us denote by $\fa
$
the set of $k$-tuples $\vt = (t_1,\dots,t_{k})\in \R^{k}$
such that
$$t_1,\dots,t_{k} > 0
\quad \mathrm{and}\quad 
\sum_{i = 1}^m t_i =\sum_{j = 1}^{n} t_{m+j} \,.$$
Given an unbounded subset $\mathcal{T}$ of $\fa
$
and positive $\vre < 1$, say that
{\sl DT can be $\vre$-improved for $Y$  along\/} $\mathcal{T}$, 
or $Y\in\DI_\vre(\mathcal{T})$,  
 if there is $t_0$ such that for every  $\vt =
(t_1,\dots,t_{k})\in\mathcal{T}$ with $\|\vt\| >t_0$, 
the  inequalities 
\eq{mdtw}{
\begin{cases}
|Y_i\vq - p_i| < \vre e^{-t_i}\,,\quad &i = 1,\dots,m
 \\  
\ \ |q_j| < \vre e^{t_{m+j}}\,,\quad &j = 1,\dots,n
\,.
\end{cases}
}
 have nontrivial integer solutions. Clearly
$
\DI_\vre
 =  \DI_\vre(\fr
)
$
where \eq{def r}{\fr
 \df \left\{\left(\tfrac t m,\dots,\tfrac t m,\tfrac t n,\dots,
\tfrac t n\right)
: t > 0\right\}} is 
the `central ray' in $\fa
$.
Denote 
$$
\lfloor \vt\rfloor \df \min_{i = 1,\dots,k}t_i\,,
$$ 
and say that 
$\mathcal{T}\subset\fa$
{\sl drifts away from walls\/} if
\eq{drift}{\forall \,s > 0\quad\exists\,
\vt\in\mathcal{T}  \text{ with }
\lfloor \vt\rfloor > s\,.}
In other words, the distance
from $\vt\in \mathcal{T}$ to the boundary of  $\fa$ 
is unbounded (hence, in particular, $\mathcal{T}$ itself is unbounded).
In \cite{di} it was shown that for any
$\mathcal{T}\subset\fa$ drifting away from walls 
and any $\vre < 1$,
the set  $\DI_\vre(\mathcal{T})$ 
has measure zero, with respect to Lebesgue measure on $M_{m,n}(\R)$. 
In \cite{nimish} it was shown that for any analytic curve $\ell
\subset \R^m \cong M_{m,1}$, which is not properly
contained in an affine subspace, any
$\mathcal{T}\subset\fa$ drifting away from walls 
and any $\vre < 1$,
the set  $\ell \cap \DI_\vre(\mathcal{T})$ 
has measure zero, with respect to the natural measure class on
$\ell$. 

In this note we discuss the condition that $\ell$ is not contained in
a proper affine subspace of $M_{m,n}$. 
We first note that in general this hypothesis is essential here, as
for some proper affine subspaces 
$\mathcal{L} \subset M_{m,n}$, all of the points of $\mathcal{L}$ are actually in
$\DI_\vre(\mathcal{T})$. It is easy to construct examples of
such rational subspaces, and below we will also construct some
irrational ones. \combarak{Formulate a precise statement, give some irrational
examples}. 

Thus there are affine subspaces contained entirely in some
$\DI_\vre(\mathcal{T})$. Nevertheless we will show that if a subspace
contains one vector which is not in $\DI_\vre(\mathcal{T})$, then it
contains many. In order to make this precise we introduce some
notation. For $Y\in M_{m,n}$, let us define the {\it \dc\/}
$\vre^{(\mathcal{T})}(Y)$ of $Y$ to be the infimum of $\vre$ for which
$Y\in\DI_\vre(\mathcal{T})$, that is, 
\begin{equation}
\label{eq: dc}
\vre^{(\mathcal{T})}(Y) \df \inf\left\{ \vre:
\begin{aligned}  \text{ the system }\equ{mdtw} \text{  has nontrivial integer}\\
\text{solutions for every
large enough } T\quad
\end{aligned}\right\}
\,,
\end{equation}
and for a subset $\mathcal{L} \subset M_{m,n}$, 
$$\vre^{(\mathcal{T})}(\mathcal{L}) \df \sup_{Y \in \mathcal{L}}
\vre^{(\mathcal{T})}(Y).$$

We will show:
\begin{thm}
\name{thm: diophantine main}
Suppose $\mathcal{L} \subset M_{m,n}$ is an affine subspace and $Y_0
\in \mathcal{L}$, and suppose $\mathcal{T} \subset \fa$ drifts away
from walls. Then for almost every $Y \in \mathcal{L}$ (with respect to
Lebesgue measure on $\mathcal{L}$),
$$\vre^{(\mathcal{T})}(Y) \geq \vre^{(\mathcal{T})}(Y_0).$$
In particular, for almost every $Y \in \mathcal{L}$, 
$$\vre^{(\mathcal{T})}(Y) = \vre^{(\mathcal{T})}(\mathcal{L}). 
$$

\end{thm}

Our proof relies on ideas of Dani \cite{dani survey},
who reduced the problem to a statement about dynamics of homogeneous
flows. To state it we introduce some more notation. 
Let
\eq{defn ggm}{G \df \SL(k,\R), \, \Gamma \df \SL(k,\Z) \text{ and} \ \pi: G \to
G/\Gamma
}
be the natural quotient map. There is an action $g \pi(g_0)
= \pi(gg_0)$ of $G$ and any of its subgroups on $\ggm$. For $\vt \in
\fa$, write  
\eq{defn gt}{g_{\vt} \df
\diag(e^{t_1}, \ldots,
e^{ t_m}, e^{-t_{m+1}}, \ldots,
e^{-t_{k}})\in G\,.
}
Given $\mathcal{T} \subset \fa$, the {\em
horospherical subgroup for} $\mathcal{T}$ is the group of elements $x
\in G$ such that for any unbounded sequence $\{\vt_i\} \subset
\mathcal{T}$, $g_{\vt_i}^{-1} x g_{\vt_i}$ tends to the identity in $G$ as
$i\to \infty$. A subgroup $U \subset G$ is {\em 
sub-horospherical for $\mathcal{T}$} if it is contained in the
horospherical subgroup for $\mathcal{T}$.

We will denote by $\{g_{\vt}x: \vt\in \TT\}'$ the set of accumulation
points of $\{g_{\vt}x: \vt \in \TT\}$, that is, the set of limit points
of convergent sequences $\{g_{\vt_i}x\}
$ where $\{\vt_i\}\subset \TT$ is an
unbounded sequence. We will prove: 

\begin{thm}
\name{thm: dynamical main}
Suppose $\mathcal{T} \subset \fa$ is unbounded and $U = \{u_s\}
\subset G$ is a one-parameter subgroup which is 
sub-horospherical for $\mathcal{T}$. For any $x \in G/\Gamma$ there is
a conull $I \subset \R$, such that for any $s \in I$, 
$$\left\{g_{\vt} x : \vt \in \mathcal{T}\right\}' \subset
\left\{g_{\vt} u_s x : \vt \in \mathcal{T}\right\}'.$$ 
\end{thm}

In \cite{dani survey}, Dani proved a special case of Theorem \ref{thm:
dynamical main}, in which $\mathcal{T}$ is taken to be a ray in
$\fa$ and $x$ is assumed to have a dense orbit under the
corresponding one-parameter diagonalizable subgroup $\{g_{\vt}\}$. Dani
deduced this statement from a deep result of Dani and Margulis 
\cite{DM gelfand seminar}, relying in turn on Ratner's celebrated
classification of ergodic invariant measures for unipotent flows
\cite{Ratner Annals}. 
Our proof follows similar lines, but we do not assume that $x$
has a dense orbit, and need to rely on an additional analysis of Mozes
and Shah \cite{Mozes Shah ergodic}, which extends that of \cite{DM
gelfand seminar}. 

We note another diophantine application of Theorem \ref{thm: dynamical
main}. 
We will say that $Y \in M_{m,n}$ is {\em badly approximable along }
$\mathcal{T}$ if there is $\vre>0$ such that \equ{mdtw} has no
solutions for $\vt \in \mathcal{T}, \vq \in \Z^n, \, \p \in \Z^m$;
that is, for all $\vq \in \Z^n, \, \p
\in \Z^m$, 
$$
\max 
\left\{e^{t_i}\left|\left(Y\vq\right)_i - p_i\right|,  e^{-t_{m+j}}
|q_j| : 1 \leq i \leq m, 1 \leq j \leq n\right\} \geq \vre. 
$$
We denote the set of such matrices by $\BA(\mathcal{T})$. Note that
$\BA(\fr)$ is the set of badly approximable linear forms. 
\combarak{The only thing that I know about $\BA(\mathcal{T})$ is that it has
measure zero (equidistribution as in DI paper). Question: Does
it have full Hausdorff 
dimension? Maybe this follows from your effective equidistribution
paper with Margulis.}

It follows easily from \cite{nimish} \combarak{is an argument needed?
Should a sketch be given after the proof of Corollary \ref{cor: not bad}?}
that for any analytic curve $\ell \subset \R^m \cong M_{m,1}$, not
contained in an affine subspace, and for every $\mathcal{T}$ which
drifts away from walls, almost every $x \in \ell$ is not in
$\BA(\mathcal{T})$. From Theorem \ref{thm: dynamical main} one finds:

\begin{cor}
\name{cor: not bad}
Suppose $\mathcal{L} \subset M_{m,n}$ is an affine subspace which is
not contained in $\BA(\mathcal{T})$, and suppose $\mathcal{T}$ drifts
away from walls. Then almost every $Y \in M_{m,n}$ is not
in $\BA(\mathcal{T}).$ 
\end{cor}

\combarak{If we want we can add here some necessary conditions
ensuring that an affine subspace $\mathcal{L}$ contains vectors not in
$\DI_{\vre}^{(\mathcal{T})}$. We can also mention the result from the
SING paper that any two dimensional irrational subspace contains
singular vectors. }

\section{Dynamical reformulation}
In this section we reformulate the diophantine conditions mentioned in
the introduction in dynamical terms. Such a reformulation has its
roots in Dani's influential 1985 paper \cite{Dani-div} and has been used in
many recent papers on diophantine approximation, see \cite{dima
survey} for a survey \combarak{is this the most appropriate survey?
Maybe one of Dani's surveys also belongs here?}. 

Let $G, \, \Gamma$  and $\pi$ be as in \equ{defn ggm}. 
Define
$$
\tau(Y) \stackrel{\mathrm{def}}{=} \left(
\begin{array}{ccccc}
I_{m} & Y \\ 0 & I_{n} 
\end{array}
\right), \ \ \ \bar{\tau} \stackrel{\mathrm{def}}{=} \pi \compose \tau\,,
$$
where $I_{\ell}$ stands for the $\ell\times \ell$ identity matrix.
\ignore{Then, given $\vr,\vs$ as in \equ{setting r}, \equ{setting s}, 
 consider the  one-parameter subgroup
$\{g_t^{(\vr,\vs)}\}$ of $G$
given by
\begin{equation*}
\label{eq: new defn g_t}
g_t^{(\vr,\vs)} \df \diag(e^{r_1t}, \ldots,
e^{r_{m} t}, e^{-s_1t}, \ldots,
e^{-s_{n} t})\,.
\end{equation*}
}
Since $\Gamma$ is the stabilizer of $\Z^{k}$ under the action of $G$ on
the set of lattices in $\R^{k}$, $\ggm$ can be identified with 
$G\Z^{k}$,
that is, with the set of all unimodular lattices in $\R^{k}$.
Note that 
\eq{expl tau}{
 \bar{\tau}(Y) = \left\{\begin{pmatrix} Y\vq - \vp\\\vq\end{pmatrix} : \vp\in\Z^m,\  \vq\in\Z^n\right\}\,.
}
Now for $\varepsilon>0$ let
\eq{keps}{
\begin{split}
K_{{\varepsilon}} &\stackrel{\mathrm{def}}{=} \pi\big(\big\{ g \in G : \Vert g
\vv \Vert \geq {\varepsilon} \quad \forall\, \vv \in
\Z^{k} \sm \{ 0 \}\big\}\big),
\end{split}}
i.e., $K_{ \varepsilon}$ is the collection of all unimodular
lattices in $\R^{k}$ which contain no nonzero vector 
of norm smaller than
$\varepsilon$.
By Mahler's compactness criterion (see e.g.\  \cite[Chapter
10]{Rag}), each $K_{{\varepsilon}}$ is compact,
and for each compact $K \subset \ggm$  there is $\vre>0$
such that $K \subset K_{ \vre}$. Note also that $K_\vre$
is empty
if $\vre > 1$ by Minkowski's Lemma, and has nonempty interior if $\vre <1$.
Let $g_{\vt}$ be as in \equ{defn gt}. 
Then, using \equ{expl tau}, it is straightforward to see  that the system \equ{mdtw} has a nonzero
integer solution if and only if 
$g_\vt \bar{\tau}(Y)\notin K_{\vre}$.
 We therefore arrive at

\begin{prop}
\name{prop: dynamical interpretation}
For \amr, $0 < \vre < 1$ and unbounded $\mathcal{T}\subset\fa
$, 
one has $Y\in\DI_\vre(\mathcal{T})$ if and only if $g_\vt \bar{\tau}(Y)$ is outside of $K_{\vre}$ 
 for all $\vt\in\mathcal{T}$
with large enough norm. 
Similarly, 
$Y \in \BA(\mathcal{T})$ if and only if 
$\{g_{\vt} \bar{\tau}(Y) : \vt \in \mathcal{T} \}$ is a bounded
subset of $\ggm$. 
\end{prop}

\begin{proof}[Deduction of diophantine results from Theorem \ref{thm:
dynamical main}]
By foliating $\mathcal{L}$ by one dimensional affine subspaces, it
suffices to prove Theorem \ref{thm: diophantine main} for
$\mathcal{L}$ one-dimensional. So let us write $\mathcal{L} = \{Y_0
+sZ: s \in \R\}$ for some $Y_0, Z \in M_{m,n}$. This means that
$\tau(\mathcal{L}) = \{u_s \tau(Y_0): s \in \R\}$ where $u_s = \tau(sZ)$. The
assumption that $\mathcal{T}$ drifts away from walls implies that
$\{u_s\}$ is a sub-horospherical one-parameter subgroup of $G$.  

Since a countable union of sets of measure zero has measure zero, it
suffices to show that for any $\vre_0 < \vre^{(\mathcal{T})}(Y_0)$,
almost every $Y \in \mathcal{L}$ is not contained in
$\DI_{\vre_0}(\mathcal{T})$. Let $\vre_0<\vre<
 \vre^{(\mathcal{T})}(Y_0)$. 
Since $Y_0 \notin \DI_{\vre}(\mathcal{T})$,
there is an infinite sequence $\vt_i \in
\mathcal{T}$ for which $g_{\vt_i} x \in K_{\vre}$, where $x =
\bar{\tau}(Y_0)$. Moreover $K_{\vre}$ is contained in the interior of
$K_{\vre_0}$, so that 
$$\left\{g_{\vt}x: \vt \in \TT\right\}' \cap \interior \, K_{\vre_0} \neq
\varnothing.$$ 
Applying Theorem \ref{thm:
dynamical main} we obtain that for almost every $s \in \R$, 
$\{g_{\vt_i} u_sx\}' \cap K_{\vre_0} \neq \varnothing$. The theorem
follows via Proposition \ref{prop: dynamical interpretation}. 

The proof of Corollary \ref{cor: not bad} is similar and is left to the
reader. \combarak{please check that there are no mistakes hiding here.}
\end{proof}

\section{Proof of Theorem \ref{thm: dynamical main}}
By Ratner's orbit-closure theorem \cite{Ratner Duke}, for any
one-parameter unipotent
subgroup $U = \{u_s\} \subset G$ and any $x \in G/\Gamma$ there is $H
= H(U,x)
\subset G$ such that $\cl{Ux} = Hx$ is the support of an finite
$H$-invariant measure $\nu$. Moreover the orbit $\{u_sx\}$ is equidistributed
with respect to this measure, i.e. the averaging measures $\nu_T(f) =
\frac{1}{T} \int_0^T f(u_sx)\, ds$ converge weak-* to $\nu$ as $T \to
\infty$. Now 
suppose we are given a sequence 
$\{x_i\} \subset \ggm$ converging to $x$, and unipotent one-parameter
subgroups $U_i
= \left\{u^{(i)}_s\right\} \subset G$ converging to $U = \{u_s\}$ in the sense
that $u^{(i)}_s \to_{i \to \infty} u_s$ for all $s$. Two natural
questions, 
addressed first by Dani and Margulis \cite{DM gelfand
seminar} and then by Mozes and Shah in \cite{Mozes Shah ergodic},
concern 
the relation of $H(U,x)$ to $H(U_i, x_i)$, and the behavior of `moving
averages' of the form 
\eq{moving avgs}{
\nu_i(f) = \frac{1}{T_i} \int_0^{T_i} f(u^{(i)}_sx_i \, ds}
for $T_i
\to \infty$. 

We have the following:

\begin{thm}
\name{thm: semi continuity}
Let $x_i \to x$ and $U_i \to U$ as above, and $T_i \to \infty$. Then
after passing to a subsequence, there is a closed connected subgroup
$H$ of $G$ such that $Hx$ is closed and supports a finite
measure $\nu$, $\nu_i \to \nu$ in the weak-*
topology, and $x \in \supp \, \nu$. 
\end{thm}

\begin{proof}
For each $i$, let $\mu_i$ be the $H_i$-invariant and ergodic measure
supported on the closed orbit $H_ix_i = U_ix_i$. Then $\supp \, \nu_i
\subset H_ix_i$.  
By the Dani-Margulis
nondivergence results (see \cite[Prop. 2.7]{Dani nondivergence}) the
sequence of measures $\{\nu_i\}$ (resp. $\{\mu_i\}$) form a
pre-compact set in the weak-* 
topology, i.e., have a subsequence which converges to a 
measure $\nu$ (resp. $\{\mu_i\}$), and we can choose the subsequences
so that simultenously $\nu_i \to \nu, \mu_i \to \mu$. It is easily
seen that $\nu$ is $U$-invariant. By \cite[Thm. 1.1]{Mozes Shah
ergodic}, there is a closed connected subgroup $\bar{H}$ of $G$,
containing $H(U,x)$, such that $\mu$ is an ergodic $H$-invariant
measure supported on a closed orbit $Hx$.  
\end{proof}


\begin{thm}[\cite{DM gelfand seminar}, Thm. ?]
\name{thm: Mozes Shah semi continuity}
Let $x_i \to x = \pi(g)$ in 
$\ggm$ and $\xi_i \to \xi$ in $\Lie(G)$ such that $u^{(i)}_s = \exp(s\xi_i)$ is
unipotent for all $i$ and all $s$. Then there is an infinite
subsequence $i_j \to \infty$ and a closed connected subgroup $H
\subset G$ containing
$\{\exp(s\xi)\}$, such that $Hx$ is a closed orbit supporting a finite
$H$-invariant measure $\nu_H$, and for every continous function $\varphi$ of compact
support on $\ggm$ and every $T_j \to \infty$, 
$$\frac{1}{T_j} \int_0^{T_j} \varphi(u^{(i_j)}_s x_{i_j})\, ds \to_{j \to
\infty} \int_{\ggm} \varphi \, d\nu_H,
$$

\end{thm}

We will need the following simple fact:
\begin{prop}
\name{prop: averages}
For any $\eta \in (0,1),$ any sequence of $x_j \in \ggm$ and
$\{u^{(j)}_s\}$ as in Theorem \ref{thm: Dani Margulis semi continuity},
and any function $\varphi$ on $\ggm$, if for any $T_j \to \infty$ one has:
$$\frac{1}{T_j} \int_0^{T_j}
\varphi(u^{(j)}_sx_j)\, ds \to_{j \to \infty} C$$ 
then also 
$$\frac{1}{(1-\eta)T_j} \int_{\eta T_j}^{T_j}
\varphi(u^{(j)}_sx_j)\, ds \to_{j \to \infty} C.
$$
\end{prop}

\begin{proof}
For $\vre>0$, let $j_1$ be large enough so that for all $j\geq j_1$, 
$$
\left| 
\frac{1}{T_j} \int_0^{T_j}
\varphi(u^{(j)}_sx_j)\, ds - C
\right| < \vre.
$$
Now let $j_2$ be large enough so that for all $j \geq j_2$, 
$$
\left| 
\frac{1}{\eta T_j} \int_0^{\eta T_j}
\varphi(u^{(j)}_sx_j)\, ds - C
\right| < \vre.
$$
Then for all $j \geq \max\{j_1, j_2\}$, 
\[
\begin{split}
\int_{\eta T_j}^{T_j}
\varphi(u^{(j)}_sx_j)\, ds 
&= \int_0^{T_j}
\varphi(u^{(j)}_sx_j)\, ds  - 
\int_0^{\eta T_j} 
\varphi(u^{(j)}_sx_j)\, ds \\
& \leq (C+\vre) T_j - (C-\vre)\eta T_j \\
& = (1-\eta)C T_j + \vre (1+\eta) T_j,
\end{split}
\]
and similarly 
$$\int_{\eta T_j}^{T_j}
\varphi(u^{(j)}_sx_j)\, ds \geq (1-\eta)CT_j -  \vre(1+\eta)T_j.
$$
Dividing through by $(1-\eta)T_j$ and letting $\vre \to 0$ yields the
required result. 
\end{proof}

\begin{proof}[Proof of Theorem \ref{thm: dynamical main}]
Let $\Omega \subset \ggm$ be an open set for
which $\Omega \cap \left\{g_{\vt}x : \vt \in \TT\right\}' \neq \varnothing.$ We
claim that it is enough to find a co-null subset $I(\Omega) \subset
\R$ such that if $s \in I(\Omega)$ then $\Omega \cap \{g_{\vt}u_sx:
\vt \in \TT\} \neq
\varnothing$; indeed assuming this, for a countable basis
$\{\Omega_j\}$ for the topology on $\ggm$, the set 
$$I =
\bigcap_{\Omega_j \cap \{g_{\vt}x \} \neq \varnothing} I(\Omega_j)$$
will satisfy the desired conclusion. 

Suppose our claim is false so that there is a subset $J \subset \R$ of
positive measure such that for all $s \in J$ there is $T=T(s)>0$ such that
for all $\vt \in \TT$ with $\|\vt \| \geq T$, 
$$g_{\vt}u_sx \notin \Omega 
.$$
Let $s_0$ be a density point for $J$, and assume with no loss of
generality that $s_0>0$. Then there is a small enough interval $[a,b]$
around $s_0$  so that the Lebesgue measure of $J \cap [a,b]$ is greater
than $(b-a)/2$. By making $a$ larger we can assume that $a>0$. Set
$\eta = (b-a)/b \in (0,1)$ and $T=b$ so that $[a,b]=[(1-\eta) T, T]$.

Let $\{\vt_i\}$ be an unbounded sequence in $\TT$. Set $u^{(i)}_s =
g_{\vt_i} u_s 
g^{-1}_{\vt_i}$ so that $g_{\vt} u_s  = u^{(i)}_sg_{\vt_i}$. 
After passing to a
subsequence, we may assume that the sequence of groups $U_i =
\{u^{(i)}_s\}$ converges to $\{\bar{u}_s\}$. Let $H$ and $\nu_H$ be as
in Theorem \ref{thm: Dani 
Margulis semi continuity}. 
\end{proof}

's convenience we have chosen to present  our results in the special case
of analytic submanifolds, and give a more general formulation at the end of the paper, where we consider measures $\nu$ on $\R^d$ pushed  forward to $\mr$ by 
continuous (not necessarily  analytic) maps $F: \R^d\to\mr$.

Here are some relevant definitions. A measure $\nu$ on
$\R^d$ is said to be {\sl $D$-Federer\/} on 
an open 
$U\subset \R^d$ if
for all balls $B$ centered at $\supp\,\nu
$ with $3B\subset U$ one has
${\nu(3B)}/{\nu(B)} \le D$. This condition is often called `doubling'
in the literature, see \cite{MU, friendly, bad} for examples and
references.
If   $U\subset \R^d$ is an open subset with $\nu(U) > 0$,
we will say that $\nu$   is {\sl 
$D$-Federer\/} if  for $\nu$-a.e.\ 
$\x\in\R^d$  there exists a neighborhood $U$ of $\x$  
such that $\nu$
is
 $D$-Federer 
on $U$. This is one of the required assumptions.
Another one  generalizes the definition of \cag\ functions. 
If $\nu$ is a measure on $\R^d$, $B$ a subset of $\R^d$ with $\nu(B)>0$, and
$f$  a real-valued measurable function on
$B$, we let
$$\Vert f \Vert_{\nu, B} \stackrel{\mathrm{def}}{=}\sup_{\x \in B\, \cap\,
\supp\, \nu} |f(\x)|\,.
$$
Given $C,\alpha > 0$, open $U \subset \R^d$
and a measure $\nu$ on
$\R^d$, say that $f:U\to \R$ is
         {\sl $(C,\alpha)$-good on $U$ with respect to
$\nu$\/}
         if for any ball $B \subset U$ centered in $\supp\,\nu$
and any
$\vre > 0$ one has
\eq{def-good-nu}{\nu\big(\{\x\in B : |f(\x)| < \vre\}\big) \le C
\left(\frac{\varepsilon}{\Vert f\Vert_{\nu, B}}\right)^\alpha{\nu(B)}\,.}

Then given
 a map  $\vf:\R^d\to \R^N$,  say that 
a pair $(\vf,\nu)$ is  
{\sl
\cag\  
on $U$\/}  if 
any linear 
combination of
$1,f_1,\dots,f_N$  is
$(C,\alpha)$-good on
$U$ with respect to $ \nu$, and that it is {\sl 
good\/} if  for $\nu$-a.e.\ 
$\x$  there exists  a neighborhood $U$ of $\x$  
and $C,\alpha > 0$ such that $(\vf,\nu)$ is
 $(C,\alpha)$-good on $U$. The upshot of introducing this terminology
 is that pairs $(\vf,\lambda)$, where $\vf$ is real analytic and $\lambda$ as before
 stands for Lebesgue measure, are good, as stated in Proposition \ref{prop: anal good}.
 However there exist numerous examples of good pairs $(\vf,\nu)$ with $\nu$ a Federer
 measure singular with respect to $\lambda$,
 see  \cite{friendly} or \cite{ bad}. For example, $\nu$ can be taken to be the limit measure
 of a finite iterated function system of contracting self-similar \cite{friendly} or self-conformal
 \cite{Urbanski} maps of $\R^N$, and $\vf$ to be real analytic (or, more generally, nondegenerate
 in some affine subspace of $\R^N$, see \cite{dima gafa} for a definition).
 
 In order to state a general version of our main theorems, we need one more definition,
 introduced in \cite{KMW}.
Given 
$Y = (y_{i,j})\in\mr$ and subsets 
$I =
\{i_1,\dots,i_{r}\}\subset \{1,\dots,m\}$ and $J =
\{j_1,\dots,j_{r}\}\subset \{1,\dots,n\}$ of equal cardinality,
define $$
y_{I,J} \df\left|\begin{matrix}
                              y_{i_{1},j_{1}} & \cdots & y_{i_{1},j_{r}} \\
                               \cdots & \cdots & \cdots \\
                               y_{i_{r},j_{1}} & \cdots & y_{i_{r},j_{r}}
                              \end{matrix}\right|
                     \,,
$$
and then consider  the map $\vd:\mr\to\R^N$, where $N \df \sum_{k = 1}^{\min(m,n)}{m\choose k}{n\choose k}$, given by
$$
\vd(Y) =  \big(y_{I,J}\big)_{I\subset  \{1,\dots,m\},\  J\subset  \{1,\dots,n\},\  0 <|I| = |J| \le \min(m,n)}\,.
$$
In other words, $\vd(Y)$ is a vector whose coordinates are determinants of all possible square submatrices of $Y$ (the order in which they appear does not matter). Note that  if  $\min(m,n) = 1$, 
the number $N$ is equal to $\max(m,n)$ and $
 \vd(Y)$ can be identified with $Y$.
\smallskip

Here is a general version of Theorem \ref{thm: ge}:

\begin{thm}\name{thm: general} Let $\nu$ be a Federer measure on
$\R^d$, $U\subset\R^d$ open,  and $F:U\to\mr$ a continuous map such that $(\vd \circ F,\nu)$ is 
 good. Also suppose we are given $\lio$ and an unbounded $\mathcal{T}\subset \fa$. Then:

\begin{itemize}
\item[(a)]  Let $\beta \ge 0$ and $\x_0\in\supp\,\nu$  be such that $\gamma_{{}_\cT}(u_{F(\x_0)} \Lambda) \le \beta$; then  $\gamma_{{}_\cT}(u_{F(\x)} \Lambda) \le \beta$ for  $\nu$-almost all  $\x\in U$; 

\item[(b)] Let $\psi:\cT\to\R_+$ be  bounded,  and suppose that $\exists\,\x_0\in\supp\,\nu$ such 
that the trajectory $ g_{{}_\cT} u_{F(\x_0)} \Lambda$ does not  diverge faster than $\psi$; then $ g_{{}_\cT} u_{F(\x)}\Lambda$ does not  diverge faster than $\psi$ for $\nu$-almost all  $\x\in U$.
\end{itemize}
\end{thm}

The proof is identical to that of Theorem \ref{thm: ge}, with Theorem \ref{thm: friendly nondivergence}
replaced by a more general version.
Consequently one has a version of Theorem \ref{thm: vwa} for pushforwards of $\nu$ by $F$.
The paper \cite{KMW} contains many examples of measures to which the above theorem apply,
such as higher-dimensional versions of fractal measures described in \cite{friendly}.
